\theoremstyle{plain}
\newtheorem{theorem}{Theorem}[section]
\newtheorem{lemma}[theorem]{Lemma}
\newtheorem{corollary}[theorem]{Corollary}
\newtheorem{proposition}[theorem]{Proposition}
\theoremstyle{definition}
\newtheorem{example}[theorem]{Example}
\newtheorem{remark}[theorem]{Remark}
\newcommand{\la}{\langle}
\newcommand{\ra}{\rangle}
 \newcommand{\al}{\alpha}
  \newcommand{\be}{\beta}
  \newcommand{\de}{\delta}
\def\bC{\hbox{$\mathbb C$}}
\def\bF{\hbox{$\mathbb F$}}
\def\bN{\hbox{$\mathbb N$}}
\def\bZ{\hbox{$\mathbb{Z}$}}
\def\bR{\hbox{$\mathbb R$}}
\def\bb{\hbox{$\boldsymbol{b}$}}
\def\on{\hbox{${}\|_\omega$}}
\def\mon{\hbox{${}\|_{\infty,1/\omega}$}}
\def\tensor{\hbox{$\widehat{\otimes}$}}
\def\ho{\hbox{$\hat\omega$}}
\def\ga{\hbox{$L^1(G)$}}
\def\L1o{\hbox{$L^1(G,\omega)$}}
\def\l1o{\hbox{$\ell^1(G,\omega)$}}
\def\Ld1o{\hbox{$L^1(G\times G, {\omega\times \omega})$}}
\def\ld1o{\hbox{$\ell^1(G\times G, {\omega\times \omega})$}}
\def\LH1o{\hbox{$L^1(G/H, {\ho})$}}
 \def\C0o{\hbox{$C_0(1/\omega)$}}
 \def\Lio{\hbox{$L^\infty\bigl(G,\frac1{\omega}\bigr)$}}
  \def\lio{\hbox{$\ell^\infty\bigl(G,\frac1{\omega}\bigr)$}}
\def\dLio{\hbox{$L^\infty\bigl(G\times G, \frac{1}{\omega\times \omega}\bigr)$}}
\def\liod{\hbox{$\ell^\infty\bigl(G\times G, \frac{1}{\omega\times \omega}\bigr)$}}
\def\Liot{\hbox{$L^\infty\bigl(G\times G\times G, \frac{1}{\omega\times \omega \times \omega}\bigr)$}}
\newcommand{\Ff}{\mathbb{F}}
\newcommand{\id}{\mathrm{id}}
 \renewcommand{\leq}{\leqslant}
\renewcommand{\geq}{\geqslant}
\newcounter{equi1}
\newcommand{\bea}{\begin{eqnarray*}}
\newcommand{\eea}{\end{eqnarray*}}
\newcommand{\beq}{\begin{equation}}
\newcommand{\eeq}{\end{equation}}
\newcommand{\begsta}{\begin{statements}}
\def\endsta{\end{statements}}
\newcommand{\begaeq}{\begin{aequivalenz}}
\def\endaeq{\end{aequivalenz}}
\begin{document}
\title[Beurling algebras]{Non-weakly amenable Beurling algebras}

\keywords{weight, locally compact group, Beurling algebra, weak amenability, subgroup, quotient group, IN group, Heisenberg group, ax+b group}

\subjclass[2010]{Primary 22D15, 43A20. Secondary 46H10, 43A10.}

\author[V. Shepelska]{Varvara Shepelska\dag}
\email{\dag\; shepelska@gmail.com}

\author[Y. Zhang]{ Yong Zhang \ddag}
\email{\ddag\; zhangy@cc.umanitoba.ca}
\thanks{\ddag \; Supported by NSERC Grant 238949}
\address{Department of Mathematics\\
           University of Manitoba\\
           Winnipeg, Manitoba\\
           R3T 2N2 Canada}

\date{November23, 2014}

\begin{abstract}
Weak amenability of a weighted group algebra, or a Beurling algebra,  is a long-standing open problem. The commutative case has been extensively investigated and fully characterized.
We study the non-commutative case. Given a weight function $\omega$ on a locally compact group $G$, we characterize derivations from $L^1(G,\omega)$ into its dual in terms of certain functions. Then we show that for a locally compact IN group $G$, if there is a non-zero continuous group homomorphism $\varphi$: $G\to \bC$ such that $\varphi(x)/\omega(x)\omega(x^{-1})$ is bounded on $G$, then $L^1(G,\omega)$ is not weakly amenable. Some useful criteria that rule out weak amenability of $L^1(G,\omega)$ are established. Using them we show that for many polynomial type weights the weighted Heisenberg  group algebra is not weakly amenable, neither is the weighted $\boldsymbol{ax+b}$ group algebra. We further study weighted quotient group algebra $L^1(G/H,\hat\omega)$, where $\hat\omega$ is the canonical weight on $G/H$ induced by $\omega$. We reveal that the kernel of the canonical homomorphism from $L^1(G,\omega)$ to $L^1(G/H,\hat\omega)$ is complemented. This allows us to obtain some sufficient conditions under which $L^1(G/H,\hat\omega)$ inherits weak amenability of $L^1(G,\omega)$. We study further weak amenability of Beurling algebras of subgroups. In general, weak amenability of a Beurling algebra does not pass to the Beurling algebra of a subgroup. However, in some circumstances this inheritance can happen. We also give an example to show that weak amenability of both $L^1(H,\omega|_H)$ and $L^1(G/H,\hat\omega)$ does not ensure weak amenability of $L^1(G,\omega)$.
\end{abstract}

\maketitle

\section{Introduction}\label{Intro}

Let $G$ be a locally compact group. As usual, we denote the integral of a function $f$ against a fixed left Haar measure by
\[
\int{f(x)dx}.
\]
 The group algebra $\ga$ is the Banach algebra consisting of all Haar integrable functions on $G$ with the convolution product and the $L^1$-norm
 \[
 \|f\|_1 = \int{|f(x)|dx}.
 \]
  Two functions in $\ga$ are regarded as the same if they are equal almost everywhere on $G$ with respect to the Haar measure.

A {\em weight function} on $G$ is a locally bounded positive measurable function $\omega:G\to\mathbb{R}^{+}$ that satisfies the submultiplicative inequality
\[
\omega(xy)\le\omega(x)\omega(y) \quad (x,y\in G).
\]
Given a weight $\omega$ on $G$, consider
$$
L^1(G,\omega)=\left\{f\,:\,f\omega \in \ga\right\}.
$$
Equipped with the norm
$$
\|f\|_\omega :=\int\limits_G |f(x)|\omega(x)\,dx
$$
and the convolution product, $L^1(G,\omega)$ becomes a Banach algebra, called a {\em weighted group algebra} or a {\em Beurling algebra}. The dual space of $\L1o$ may be identified with
\[
\Lio :=\{f: \, f/\omega \in L^\infty(G)\}
\]
whose norm is given by
\[
\|f\mon = \underset{x\in G}{\textrm{ess}\sup}\,\frac{|f(x)|}{\omega(x)} \quad (f\in \Lio).
\]
Obviously, as a Banach space $\L1o$ is isometrically isomorphic to $\ga$. However, as Banach algebras these two are very different. For example, it is well-known that $\ga$ is a typical quantum group algebra \cite{K-V}, while $\L1o$ is usually not, although $\Lio$ is a von Neumann algebra with the product $f\cdot g = \frac{1}{\omega}fg$. In fact, $\L1o$ is not even an F-algebra, unless the weight is trivial (meaning that the weight is multiplicative). We refer to~\cite{L-Z3} for the relation between quantum groups and F-algebras.

The investigation of $\L1o$ goes back to A. Beurling \cite{Beurling1938}, where $G=\mathbb{R}$ was considered. One may find a good account of elementary theory concerning the general weighted group algebra in \cite{RS}.

Two weight functions $\omega_1$ and $\omega_2$ on $G$ are called {\em equivalent} if there are constants $c_1, c_2 >0$ such that
\[
c_1 \omega_1(x) \leq \omega_2(x) \leq c_2 \omega_1(x)
\]
locally almost everywhere on $G$. It is readily seen that if $\omega_1$ and $\omega_2$ are equivalent weights, then $L^1(G, \omega_1)$ and $L^1(G, \omega_2)$ are isomorphic as Banach algebras. It is well-known that a weight on $G$ is always equivalent to a continuous weight on $G$ (see \cite{White}, or \cite[Theorem~3.7.5]{RS} for a proof; note that in \cite{RS} the condition $\omega \geq 1$ is not necessary if we do not require the weighted algebra to be a subalgebra of~$\ga$). For this reason, unless otherwise is specified, in this paper we always assume that a weight is continuous.

We are concerned with weak amenability of the Beurling algebra $\L1o$. We refer to \cite{D-L-M,F-G-L-L-M,Neufang} for research of other aspects regarding Beurling algebras. Special types of groups have been studied in \cite{H-K-K,L-M-P,Willis}. Related research concerning weighted Fourier algebras may be found in \cite{L-S,L-S-T}.

Recall that a {\em derivation} from a Banach algebra $A$ to a Banach $A$-bimodule $X$ is a linear mapping $D$: $A\to X$ satisfying $D(ab)=a\cdot D(b)+D(a)\cdot b$ ($a,b\in A$). For every $x\in X$ the map $a\mapsto a\cdot x-x\cdot a$ is a continuous derivation, called an {\em inner derivation}. Given a Banach $A$-bimodule $X$, its dual space $X^*$ is naturally a Banach $A$-bimodule (called the dual module of $X$) with the module actions defined by
\[
\la x, a\cdot f\ra = \la x\cdot a, f\ra, \quad \la x, f\cdot a\ra = \la a\cdot x, f\ra \quad (a\in A, f\in X^*, x\in X).
\]
 Following B. E. Johnson \cite{Johnson}, we call $A$ {\em amenable} if every continuous derivation from $A$ into any dual Banach $A$-bimodule $X^*$ is inner. Johnson showed in \cite{Johnson} that the group algebra $\ga$ is amenable if and only if $G$ is an amenable group. Later N. Gronbaek showed in \cite{Gronbaek} that the weighted group algebra $\L1o$ is amenable if and only if $G$ is an amenable group and $\omega$ is a diagonally bounded weight, i.e., the function $\omega(x)\omega(x^{-1})$ is bounded on $G$. The latter conditions actually imply that the weight $\omega$ is bounded up to a multiplicative factor. Hence, a nontrivial weighted group algebra is intrinsically not an amenable Banach algebra.

Weak amenability for commutative Banach algebras was introduced by Bade, Curtis, and Dales in \cite{BCD}. Based on a characterization result of \cite{BCD}, Johnson later called a general Banach algebra $A$ {\em weakly amenable} if every continuous derivation from $A$ into $A^*$ is inner. He showed in \cite{Johnson2} that $\ga$ is weakly amenable for all locally compact groups $G$.

Weak amenability of Beurling algebras has been studied by many authors. In \cite{BCD} it was shown that $L^1(\bZ, \omega_\al)$ for the additive group $\bZ$ and the polynomial weight $\omega_\al(x)= (1+|x|)^\al$ is weakly amenable if and only if $0\leq \al < 1/2$. The same conclusion holds if $\bZ$ is replaced with $\bR$ (\cite{DL, Samei, zhang}). In \cite{Gronbaek2} N. Gronbaek showed that the Beurling algebra of a commutative discrete group $G$ is weakly amenable if and only if every non-trivial group homomorphism $\Phi$: $G\to \bC$ satisfies
\beq\label{homo}
\sup\limits_{g\in G} \frac{|\Phi(g)|}{\omega(g)\omega(g^{-1})}=\infty.
\eeq

It turns out that this characterization is still valid for a general commutative locally compact group.
\begin{theorem}\cite[Theorem~3.1]{zhang}\label{Thm_zhang}
Let $G$ be an Abelian locally compact group, and $\omega$ be a weight on $G$. The Beurling algebra $L^1(G,\omega)$ is weakly amenable if and only if (\ref{homo}) holds for every continuous non-zero group homomorphism $\Phi:G\to\mathbb{C}$.
\end{theorem}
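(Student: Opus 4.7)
Since $G$ is abelian, the algebra $L^1(G,\omega)$ is commutative and its dual bimodule $L^\infty(G,1/\omega)$ is symmetric (i.e.\ $a\cdot f = f\cdot a$ for all $a\in L^1(G,\omega)$, $f\in L^\infty(G,1/\omega)$), so every continuous inner derivation into $L^\infty(G,1/\omega)$ vanishes identically. Consequently, weak amenability of $L^1(G,\omega)$ is equivalent to the vanishing of the entire space of continuous derivations $D\colon L^1(G,\omega)\to L^\infty(G,1/\omega)$. My plan is therefore to establish the equivalence: a non-zero continuous derivation $D$ exists if and only if there is a continuous non-zero group homomorphism $\Phi\colon G\to\bC$ satisfying $|\Phi(x)|\le C\,\omega(x)\omega(x^{-1})$ for some constant~$C$.

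\noindent\textbf{From $\Phi$ to $D$.} Given such a $\Phi$, I would invoke the characterisation of derivations on $L^1(G,\omega)$ in terms of cocycle functions on $G\times G$ (to be developed earlier in the paper). The additivity $\Phi(xy)=\Phi(x)+\Phi(y)$ makes the function $(x,y)\mapsto\Phi(x)$ into a $1$-cocycle that induces a derivation $D_\Phi$, and the precise hypothesis $|\Phi(x)|\le C\,\omega(x)\omega(x^{-1})$ is exactly the condition needed for $D_\Phi$ to take values in $L^\infty(G,1/\omega)$ with a uniform norm estimate. Non-triviality of $D_\Phi$ is verified by testing on a compactly supported $f$ with $\int f \ne 0$.

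\noindent\textbf{From $D$ to $\Phi$.} Conversely, given a non-zero continuous derivation $D$, I would first extend it to a weak-star continuous derivation on the weighted measure algebra $M(G,\omega)$ and set $\Psi(x):=D(\delta_x)\in L^\infty(G,1/\omega)$. The cocycle identity $\Psi(xy)=\delta_x\cdot\Psi(y)+\Psi(x)\cdot\delta_y$, combined with the symmetry of the module action, forces a suitable scalar extraction $\Phi(x):=\langle\chi,\Psi(x)\rangle$ (for a carefully chosen $\chi\in L^1(G,\omega)$) to be additive in $x$ and continuous, hence a continuous homomorphism $G\to\bC$ that is non-zero for at least one choice of $\chi$. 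The norm estimate $\|\Psi(x)\|_{\infty,1/\omega}\le\|D\|\,\omega(x)$ gives $|\Phi(x)|\le C\,\omega(x)$; applying the same estimate to $\Psi(x^{-1})$ and using the forced relation $\Phi(x^{-1})=-\Phi(x)$ gives $|\Phi(x)|\le C\,\omega(x^{-1})$. Multiplying these two inequalities and using $\omega(x)\omega(x^{-1})\ge\omega(e)\ge 1$ yields $|\Phi(x)|\le C'\sqrt{\omega(x)\omega(x^{-1})}\le C'\,\omega(x)\omega(x^{-1})$, contradicting (\ref{homo}).

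\noindent\textbf{Main obstacle.} The delicate step is the extraction $D\rightarrow\Phi$ in the non-discrete case. For discrete abelian $G$ this is essentially Gronbaek's original argument from \cite{Gronbaek2}, applied directly through the point masses $\delta_x$. For general locally compact abelian $G$ one must replace $\delta_x$ by weak-star limits of translates of an approximate identity, and verify that these limits depend only on $x$, are continuous in $x$, and carry a uniform $\omega$-bound. This amounts to extending $D$ to a weak-star continuous derivation on $M(G,\omega)$ and showing that the induced map $x\mapsto\Psi(x)$ is well-defined, norm-continuous, and compatible with the cocycle identity used above; this is where the bulk of the technical work lies.
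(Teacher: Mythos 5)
First, a framing remark: the paper does not prove this statement at all --- it is imported verbatim from \cite{zhang}, and the only ingredient of its proof that appears here is Theorem~\ref{IN}, which (since every abelian group is an IN group) gives the ``necessity'' half. Your overall architecture (inner derivations vanish by commutativity, so weak amenability means every bounded derivation into $L^\infty(G,1/\omega)$ is zero; then pass between derivations and homomorphisms) is the right shape, but both halves of your sketch have gaps, and the second is fatal as written. In the direction ``from $\Phi$ to $D$'', the cocycle $(x,y)\mapsto\Phi(x)$ is \emph{not} an element of $L^\infty\bigl(G\times G,\frac{1}{\omega\times\omega}\bigr)$ under the hypothesis $|\Phi(x)|\le C\,\omega(x)\omega(x^{-1})$: membership requires $\sup_{x,y}|\Phi(x)|/(\omega(x)\omega(y))<\infty$, which involves $\inf_y\omega(y)$ rather than $\omega(x^{-1})$ and already fails for $G=\mathbb{Z}$, $\omega(n)=(1+|n|)^{1/2}$, $\Phi(n)=n$. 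One must cut off along the anti-diagonal, taking $\alpha(x,y)=\chi_{B}(yx)\Phi(x)$ for a compact neighbourhood $B$ of $e$ --- exactly formula (\ref{62}) and the computation in the proof of Theorem~\ref{IN}, where $yx\in B$ forces $\omega(x^{-1})\le c\,\omega(y)$ and the diagonal bound on $\Phi$ is what makes $\alpha$ bounded. That repair is the entire content of the step.

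The direction ``from $D$ to $\Phi$'' is where your argument breaks. The extraction $\Phi(x)=\langle\chi,\Psi(x)\rangle$ for a \emph{fixed} $\chi$ is not additive: pairing $\Psi(xy)=\delta_x\cdot\Psi(y)+\Psi(x)\cdot\delta_y$ with $\chi$ yields $\langle\chi*\delta_x,\Psi(y)\rangle+\langle\delta_y*\chi,\Psi(x)\rangle$, not $\Phi(x)+\Phi(y)$. Worse, the bound you derive, $|\Phi(x)|\le C\min\bigl(\omega(x),\omega(x^{-1})\bigr)\le C\sqrt{\omega(x)\omega(x^{-1})}$, proves too much: $\ell^1(\mathbb{Z},\omega_{1/2})$ with $\omega_{1/2}(n)=(1+|n|)^{1/2}$ is known not to be weakly amenable (see \cite{BCD}), so a non-zero bounded derivation exists, yet no non-zero homomorphism $\Phi(n)=cn$ satisfies $|cn|\le C(1+|n|)^{1/2}$; your scheme would therefore ``prove'' weak amenability at $\alpha=1/2$, which is false. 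The quantity $\omega(x)\omega(x^{-1})$ cannot be reached by multiplying two separate linear bounds --- it enters only by evaluating the kernel $\alpha\in L^\infty\bigl(G\times G,\frac{1}{\omega\times\omega}\bigr)$ of Lemma~\ref{derivationform} \emph{near the anti-diagonal}: the cocycle identity at $z=(xy)^{-1}$ gives $\alpha(xy,(xy)^{-1})=\alpha(x,x^{-1})+\alpha(y,y^{-1})$, so a suitably smoothed $\Phi(x)=\int_B\alpha(x,x^{-1}\xi)\,d\xi$ is additive, and $|\alpha(x,x^{-1}\xi)|\le\|\alpha\|\,\omega(x)\omega(x^{-1})\omega(\xi)$ supplies precisely the bound negating (\ref{homo}). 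Even with this fixed, the genuinely hard point --- showing $\Phi$ can be chosen non-zero whenever $D\ne0$ --- is asserted but not addressed in your sketch; it is the heart of the arguments in \cite{Gronbaek2} and \cite{zhang} and cannot be waved through.
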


However, condition (\ref{homo}) is far from being sufficient for $\L1o$ to be weakly amenable if the group $G$ is not commutative. A counterexample associated to discrete $SL_2(\mathbb{R})$ was obtained in \cite{Borwick}. In \cite{Shepelska} the first author showed that with a non-trivial polynomial weight $\omega_\al$ the algebra $\ell^1(\bF_2,\omega_\al)$ is never weakly amenable. This contrasts with the results on commutative groups $\bZ$ and $\bR$ mentioned above. Similar investigations concerning the discrete $\boldsymbol{ax+b}$ group were also conducted there. Overall, weak amenability of a non-commutative Beurling algebra is still very unclear. So far we have not even seen a non-trivial example of weakly amenable Beurling algebra which is not commutative. The related problem of weak amenability of the center algebra of a Beurling algebra has been studied in \cite{A-S-S,SZ2,zhang}.

In this paper, in Section~\ref{necessary} we first characterize continuous derivations from $\L1o$ into its dual in terms of certain functions from $\dLio$. We then show that the necessity part of Theorem~\ref{Thm_zhang} remains true if $G$ is an IN group, improving a result of \cite{zhang}. We further establish a criterion that rules out weak amenability of a Beurling algebra. As an application, we show that the weighted group algebra of the topological  Heisenberg group with certain type of ``polynomial weights'' is not weakly amenable.

 In Section~\ref{affine} we continue the investigation of \cite{Shepelska} on weighted $\boldsymbol{ax+b}$ group algebras. For the topological $\boldsymbol{ax+b}$ group, we show that the Beurling algebra on $\boldsymbol{ax+b}$ with a polynomial weight is never weakly amenable. For the discrete case we show that if the weight is independent of $b$, then the corresponding Beurling algebra is weakly amenable only when the weight is diagonally bounded.
 This provides us with an example of a locally compact group $G$ with a closed normal subgroup $H$ and a weight $\omega$ such that both Beurling algebras $L^1(H,\omega|_H)$ and $L^1(G/H,\hat{\omega})$ are weakly amenable, but $L^1(G,\omega)$ is not weakly amenable, where $\hat\omega$ is a weight on $G/H$ naturally induced from $\omega$.

In Section~\ref{c5} we study Beurling algebras associated to quotient groups. If $H$ is a closed normal subgroup of $G$ then
$$
L^1(G/H,\hat{\omega})\cong L^1(G,\omega)/J_{\omega}(G,H),
$$
 where $J_{\omega}(G,H)$ is a closed ideal of $L^1(G,\omega)$. We show that $J_\omega(G,H)$ is always complemented in $\L1o$. This allows us to establish a sufficient condition under which weak amenability of $L^1(G,\omega)$ is inherited by $L^1(G/H,\hat{\omega})$. Using this result, we prove that weak amenability of the tensor product $L^1(G_1,\omega_1)\hat{\otimes}L^1(G_2,\omega_2)$ implies weak amenability of both $L^1(G_1,\omega_1)$ and $L^1(G_2,\omega_2)$, provided the weights $\omega_1$, $\omega_2$ are bounded away from zero.
 The question whether the converse is true remains open except for the case when $G$ is Abelian \cite[Corollary~3.10]{zhang}. We also improve a result of \cite{LL} concerning weak amenability of a complemented subalgebra.

 In Section~\ref{commu} we investigate Beurling algebras of subgroups. Example~\ref{abelian_sum} shows that, even in the Abelian case, weak amenability of a Beurling algebra does not imply weak amenability of the induced Beurling algebra of a subgroup. However, the implication is true under some circumstances. We also investigate the problem of extending a group homomorphism from a subgroup to the whole group in Section~\ref{commu}.

\section{Criteria ruling out weak amenability of $L^1(G,\omega)$}\label{necessary}

We start from a characterization of a bounded derivation from $L^1(G, \omega)$ into its dual $L^1(G, \omega)^* = L^{\infty}\bigl(G,\frac{1}{\omega}\bigr)$. It generalizes a result of B. E. Johnson \cite{Johnson2} which deals with the case $\omega\equiv1$.

Let $G_1$ and $G_2$ be two locally compact groups and $\omega_i$ be a weight on $G_i$ ($i=1,2$). We denote by $\omega_1\times\omega_2$ the weight on $G_1\times G_2$ defined by
$$
(\omega_1\times\omega_2)(x_1,x_2)=\omega_1(x_1)\omega_2(x_2)\quad (x_1\in G_1,\ x_2\in G_2).
$$

\begin{lemma}\label{derivationform}
Let $G$ be a locally compact group and $\omega$ be a weight on $G$. Then for every bounded derivation $D: L^1(G,\omega)\to L^{\infty}\bigl(G,\frac{1}{\omega}\bigr)$ there exists a function $\alpha\in L^{\infty}\bigl(G\times G,\frac{1}{\omega\times\omega}\bigr)$ such that
\begin{equation}\label{12}
\alpha(xy,z)=\alpha(x,yz)+\alpha(y,zx)\ \ \ (\text{locally a.e.}\ \ (x,y,z)\in G\times G\times G)\quad\text{and}
\end{equation}
\begin{equation}\label{13}
\langle g,D(f)\rangle  =\iint\limits_{G\times G} \alpha(x,y)f(x)g(y)\,dxdy\quad (f,g\in L^1(G,\omega)).
\end{equation}
Conversely, every function $\alpha\in L^{\infty}\bigl(G\times G,\frac{1}{\omega\times\omega}\bigr)$ satisfying (\ref{12}) defines a bounded derivation $D: L^1(G,\omega)\to L^{\infty}(G,1/\omega)$ by the formula (\ref{13}).
\end{lemma}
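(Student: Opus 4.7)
The plan is to identify derivations $D\colon L^1(G,\omega)\to L^\infty(G,1/\omega)$ with bounded bilinear forms on $L^1(G,\omega)\times L^1(G,\omega)$, and then pass to the dual of the projective tensor product. The standard isometric identification
$$L^1(G,\omega)\,\widehat{\otimes}\,L^1(G,\omega)\cong L^1(G\times G,\omega\times\omega),$$
which follows from the unweighted case by the renormalization $f\mapsto f\omega$, together with the duality $L^1(G\times G,\omega\times\omega)^{*}\cong L^\infty(G\times G,1/(\omega\times\omega))$, shows that every bounded bilinear form on $L^1(G,\omega)\times L^1(G,\omega)$ is represented by a unique kernel $\alpha\in L^\infty(G\times G,1/(\omega\times\omega))$. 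Applying this to $B_D(f,g):=\langle g,D(f)\rangle$ yields the integral representation (\ref{13}) of any bounded $D$. Conversely, any such $\alpha$ defines a bounded operator $D$ via $D(f)(y):=\int\alpha(x,y)f(x)\,dx$, whose norm is dominated by $\|\alpha\|_{\infty,1/(\omega\times\omega)}$.

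Having established the correspondence $D\leftrightarrow\alpha$, the next step is to translate the Leibniz identity $D(f\ast h)=f\cdot D(h)+D(f)\cdot h$ into a functional equation for $\alpha$. Using the dual module actions given in the introduction, $\langle g,f\cdot D(h)\rangle=\langle g\ast f,D(h)\rangle$ and $\langle g,D(f)\cdot h\rangle=\langle h\ast g,D(f)\rangle$. Expanding each convolution as an integral, applying Fubini, and using the left-invariance of the Haar measure (substitution $x=st$ in the first slot on the LHS, and $y=st$ in the second slot on the RHS) brings each term into a triple integral against the common test function $f(a)h(b)g(c)$:
\begin{align*}
\langle g,D(f\ast h)\rangle &=\iiint \alpha(ab,c)\,f(a)h(b)g(c)\,da\,db\,dc,\\
\langle g,f\cdot D(h)\rangle &=\iiint \alpha(b,ca)\,f(a)h(b)g(c)\,da\,db\,dc,\\
\langle g,D(f)\cdot h\rangle &=\iiint \alpha(a,bc)\,f(a)h(b)g(c)\,da\,db\,dc.
\end{align*}
Since elementary tensors $f\otimes h\otimes g$ span a dense subspace of $L^1(G^3,\omega\times\omega\times\omega)$, the derivation identity forces the three integrands to agree locally almost everywhere on $G^3$. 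Renaming $(a,b,c)$ as $(x,y,z)$ gives exactly (\ref{12}).

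For the converse, given $\alpha\in L^\infty(G\times G,1/(\omega\times\omega))$ satisfying (\ref{12}), define $D$ by $D(f)(y):=\int\alpha(x,y)f(x)\,dx$, which is bounded by the first paragraph and represents the pairing (\ref{13}). Running the above computation in reverse, the cocycle identity (\ref{12}) yields $\langle g,D(f\ast h)\rangle=\langle g,f\cdot D(h)+D(f)\cdot h\rangle$ for every $g\in L^1(G,\omega)$, so $D$ is a derivation.

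The principal technical hurdle is the bookkeeping: one must correctly distinguish the left and right dual-module actions of $L^1(G,\omega)$ on $L^\infty(G,1/\omega)$, justify Fubini in the weighted setting at each reordering, and carry out the Haar-invariant substitutions so that all three triple integrals line up against a common test function. Once these manipulations are executed, the cocycle identity (\ref{12}) drops out immediately from the Leibniz rule.
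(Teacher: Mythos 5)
Your proposal is correct and follows essentially the same route as the paper: identify $D$ with a kernel $\alpha$ via the duality $\bigl(L^1(G,\omega)\widehat{\otimes}L^1(G,\omega)\bigr)^*\cong L^\infty\bigl(G\times G,\frac{1}{\omega\times\omega}\bigr)$, then test the Leibniz rule against elementary tensors $f\otimes h\otimes g$ and use their density in $L^1(G^3,\omega\times\omega\times\omega)$ to extract the cocycle identity locally a.e. The only point the paper makes explicit that you pass over quickly is that the three composed kernels $\alpha(xy,z)$, $\alpha(x,yz)$, $\alpha(y,zx)$ genuinely lie in $L^\infty\bigl(G^3,\frac{1}{\omega\times\omega\times\omega}\bigr)$ (there via the comultiplication $\pi(f)(x,y)=f(xy)$; in your setup it follows from submultiplicativity of $\omega$), which is what licenses the passage from equality of integrals on a dense set to equality of the integrands.
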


\begin{proof}
Given a bounded derivation $D: L^1(G,\omega)\to L^{\infty}\bigl(G,\frac{1}{\omega}\bigr)$, the map $(f,g)\mapsto \langle g,D(f)\rangle$ is a bilinear functional on $\L1o\times \L1o$ and we have
\[
|\la g, D(f)\ra| \leq \|D\|\,\|g\on \|f\on.
\]
Hence this map defines a bounded linear functional
\[
\al \in \left(\L1o\tensor\L1o\right)^* = L^{\infty}\Bigl(G\times G,\frac{1}{\omega\times\omega}\Bigr)
\]
by
\[
\langle f\otimes g, \alpha\rangle = \langle g,D(f)\rangle \quad (f,g\in L^1(G,\omega)).
\]
It follows that relation (\ref{13}) holds.

Let $\pi$: $\Lio \to \dLio$ be the operator defined by
\[
\pi(f)(x,y) = f(xy) \quad \left(f\in L^{\infty}\Bigl(G,\frac{1}{\omega}\Bigr)\right).
\]
From \cite[Corollary~3.3.32]{RS} it is readily seen that $\pi(f)\in \dLio$ if $f\in \Lio$, and $\|\pi(f)\|_{\infty,1/(\omega\times\omega)} = \|f\mon$. 

Applying $\pi\otimes \id$ to $\al$, where $\id$ stands for the identity operator on $\Lio$, we see that the function $\al_1(x,y,z) = \al(xy,z)$ belongs to $\Liot$. Similarly, the functions  $\al_2(x,y,z)=\al(x,yz)$ and $\al_3(x,y,z) = \al(y,zx)$ also belong to $\Liot$. In order to show that identity (\ref{12}) holds, it suffices to verify
\[
\la f\otimes g\otimes h, \al_1\ra = \la f\otimes g\otimes h, \al_2\ra + \la f\otimes g\otimes h, \al_3\ra .
\]
In fact,
\begin{align*}
\la f\otimes g\otimes h, \al_1\ra &=\int\limits_{G^3}{\al(xy,z)f(x)g(y)h(z)}dxdydz\\
                                                    & = \int\limits_{G^2}{\al(y,z)(f*g)(y)h(z)}dydz = \la h, D(f*g)\ra\\
           &= \la h, f\cdot D(g) + D(f)\cdot g\ra  = \la h*f, D(g) \ra + \la g*h, D(f) \ra \\
         &=  \la f\otimes g\otimes h, \al_2\ra + \la f\otimes g\otimes h, \al_3\ra
\end{align*}
for all $f,g,h\in \L1o$. Therefore (\ref{12}) holds.

The converse can be easily verified by computation. The proof is complete.

\end{proof}

Recall that a locally compact group $G$ is an IN group if it has a compact neighborhood of the unit element $e$ which is invariant under all inner automorphisms, i.e., if there is a compact neighborhood $U$ of $e$ such that $gUg^{-1} = U$ for all $g\in G$. It was shown in \cite[Remark~3.2]{zhang} by the second author that for an IN group $G$ the necessity part of Theorem~\ref{Thm_zhang} remains true under some extra condition. We now can remove this condition. Precisely, we have the following.

\begin{theorem}\label{IN}
Let $G$ be an IN group and $\omega$ be a weight on $G$. Suppose that there exists a non-trivial continuous group homomorphism $\Phi: G\to\mathbb{C}$ such that
\[
\sup\limits_{t\in\,G}\,\frac{|\Phi(t)|}{\omega(t)\omega(t^{-1})}<\infty.
\]
Then $L^1(G,\omega)$ is not weakly amenable.
\end{theorem}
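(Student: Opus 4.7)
The plan is to apply Lemma~\ref{derivationform} and exhibit a bounded derivation $D\colon\L1o\to\Lio$ that is not inner. The naive cocycle $\alpha(x,y)=\Phi(x)$ satisfies \eqref{12} algebraically but in general does not lie in \dLio; the idea is to localize it by a central cut-off supplied by the IN neighborhood, so that the hypothesis $|\Phi(x)|\le M\omega(x)\omega(x^{-1})$ gets converted into a bound by $\omega(x)\omega(y)$.

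Concretely, I would let $U$ be a compact conjugation-invariant neighborhood of $e$, set $C_U=\sup_{u\in U}\omega(u^{-1})<\infty$ (finite because $\omega$ is locally bounded and $U$ is compact), and define
\[
\alpha(x,y)=\Phi(x)\chi_U(xy)\qquad (x,y\in G).
\]
On the support $\{xy\in U\}$, writing $u=xy$ gives $\omega(x^{-1})=\omega(yu^{-1})\le\omega(y)\omega(u^{-1})\le C_U\omega(y)$, so $|\alpha(x,y)|\le MC_U\omega(x)\omega(y)$ and $\alpha\in\dLio$. Because $U$ is conjugation invariant, $\chi_U$ is trace-like, $\chi_U(ab)=\chi_U(ba)$, hence $\chi_U(yzx)=\chi_U(xyz)$; combining with $\Phi(xy)=\Phi(x)+\Phi(y)$ yields \eqref{12}. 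Lemma~\ref{derivationform} then produces a bounded derivation $D$ corresponding to $\alpha$.

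The hard part is to show $D$ is not inner. A direct computation from the module actions shows that an inner derivation $D_\psi$ corresponds via Lemma~\ref{derivationform} to $\alpha_\psi(x,y)=\psi(yx)-\psi(xy)$. Assuming $\alpha=\alpha_\psi$ locally a.e.\ for some $\psi\in\Lio$, the measure-preserving change of variables $(x,y)\mapsto(x,u)$ with $u=xy$ turns the identity into $\Phi(x)\chi_U(u)=\psi(x^{-1}ux)-\psi(u)$ locally a.e.\ in $(x,u)$. For almost every $x$ I would integrate over $u\in U$: using that $G$ is unimodular (every IN group is) and $x^{-1}Ux=U$, the substitution $v=x^{-1}ux$ is measure-preserving and stays inside $U$, so $\int_U\psi(x^{-1}ux)\,du=\int_U\psi(v)\,dv$. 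Hence the right-hand side vanishes, forcing $\Phi(x)m(U)=0$ for a.e.\ $x$; continuity of $\Phi$ then gives $\Phi\equiv0$, a contradiction.

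The principal obstacle is the weighted integrability of the cocycle: $\alpha(x,y)=\Phi(x)$ does not belong to \dLio{} unless $\omega$ is bounded below, which is precisely the unwanted extra hypothesis of \cite[Remark~3.2]{zhang}. The cut-off $\chi_U(xy)$ localizes the cocycle to the region where $y$ is close to $x^{-1}$, which is exactly where $|\Phi(x)|\le M\omega(x)\omega(x^{-1})$ translates into the required bound $|\Phi(x)|\le MC_U\omega(x)\omega(y)$; conjugation-invariance of $U$ simultaneously keeps $\chi_U$ central and so preserves the cocycle identity; and unimodularity of IN groups makes the averaging over $U$ clean enough to rule out the inner-derivation ansatz.
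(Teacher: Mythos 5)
Your proposal is correct, and the derivation you construct is literally the one in the paper: the kernel $\alpha(x,y)=\Phi(x)\chi_U(xy)$ (with $U$ a conjugation-invariant compact neighborhood) is the paper's $\alpha(\xi,t)=\chi_B(t\xi)\Phi(\xi)$, and your boundedness estimate via $x^{-1}=yu^{-1}$ with $u=xy\in U$ and your verification of the cocycle identity via conjugation-invariance of $\chi_U$ are the same computations. Where you diverge is in proving non-innerness. The paper first shows that $D(h)$ is a continuous function for each $h$, then tests the assumed identity $D(h)=f\cdot h-h\cdot f$ against $h_0=\chi_{t_0^{-1}B}$ and evaluates \emph{pointwise} at $t_0$, using continuity to upgrade an a.e.\ statement to a pointwise one; the invariance $t_0^{-1}Bt_0=B$ kills the $f$-terms and yields $\Phi(t_0)=\mu(B)^{-1}\int_B\Phi$, so $\Phi$ is constant, hence zero. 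You instead stay at the level of kernels: from $\alpha(x,y)=\psi(yx)-\psi(xy)$ locally a.e.\ you substitute $u=xy$, use Fubini to integrate in $u$ over $U$ for a.e.\ $x$, and exploit unimodularity together with $x^{-1}Ux=U$ to make the $\psi$-terms cancel, giving $\Phi(x)\mu(U)=0$ a.e.\ and hence $\Phi\equiv0$ by continuity. Both arguments rest on the same averaging-over-$U$ idea, but your version bypasses the paper's continuity lemma entirely (at the cost of a routine Fubini/null-set argument and the explicit observation that IN groups are unimodular and that $\psi\in L^\infty(G,1/\omega)$ is integrable on compacta), which makes for a somewhat shorter and cleaner finish; the paper's pointwise route has the side benefit of identifying $D(h)$ as a continuous function, which it reuses elsewhere.
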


\begin{proof}
We use $\Phi$ to construct a continuous non-inner derivation $D:L^1(G,\omega)\to L^{\infty}\bigl(G,\frac1{\omega}\bigr)$. Let $B$ be an invariant compact neighborhood of $e$. Define $D$ as in \cite[Theorem~3.1]{zhang} by
\begin{equation}\label{62}
D(h)(t)=\int\limits_{B} \Phi(t^{-1}\xi)h(t^{-1}\xi)\,d\xi\quad (h\in L^1(G,\omega),\ t\in G).
\end{equation}
As indicated in \cite[Remark~3.2]{zhang}, $D$ is indeed a continuous derivation. Here we show this by using Lemma~\ref{derivationform}. For all $g,h\in L^1(G,\omega)$ we have
\[
\langle g, D(h)\rangle =\int\limits_G \int\limits_{t^{-1}B} \Phi(\xi)h(\xi)\,d\xi\,g(t)\,dt=\int\limits_G\int\limits_G \chi_{\phantom{}_{t^{-1}B}}(\xi)\Phi(\xi)h(\xi)g(t)\,d\xi dt.
\]
Let $\alpha(\xi,t)= \chi_{\phantom{}_{B}}(t\xi)\Phi(\xi)$. Then $\alpha$ is clearly a measurable function on $G\times G$. Also,
\begin{align*}
\sup\limits_{(\xi,t)\in G\times G}\,\frac{|\alpha(\xi,t)|}{\omega(\xi)\omega(t)} &=\sup\limits_{\xi,t\in G}\,\frac{|\chi_{\phantom{}_{B}}(t\xi)\Phi(\xi)|}{\omega(\xi)\omega(t)}=\sup\limits_{\xi,t\in G,\,t\xi\in B}\,\frac{|\Phi(\xi)|}{\omega(\xi)\omega(t)}\\
&\le \sup\limits_{\xi\in G}\, \frac{|\Phi(\xi)|}{\omega(\xi)\omega(\xi^{-1})}\cdot \sup\limits_{\xi,t\in G,\,t\xi\in B}\,\frac{\omega(\xi^{-1})}{\omega(t)}\\
&\le \sup\limits_{\xi\in G}\, \frac{|\Phi(\xi)|}{\omega(\xi)\omega(\xi^{-1})}\cdot \sup\limits_{\xi,t\in G,\,t\xi\in B}\,\omega((t\xi)^{-1})<\infty,
\end{align*}
since $\sup_{\xi\in G}\, \frac{|\Phi(\xi)|}{\omega(\xi)\omega(\xi^{-1})}<\infty$ and $\omega$ is bounded on the compact set $B^{-1}$ as a continuous function. So we have shown that $\alpha\in L^{\infty}\bigl(G\times G,\frac1{\omega\times\omega}\bigr)$. Next we prove that
\begin{equation}\label{70}
\alpha(xy,z)=\alpha(x,yz)+\alpha(y,zx)\quad (x,y,z\in G).
\end{equation}
Fix $x,y,z\in G$. Since $yzx=y(zxy)y^{-1}$ and $B$ is invariant under inner automorphisms, we have that $\chi_{\phantom{}_B}(zxy)=\chi_{\phantom{}_B}(yzx)$. Then we can use the fact that $\Phi$ is a group homomorphism to obtain
\begin{align*}
\alpha(xy,z)&=\chi_{\phantom{}_B}(zxy)\Phi(xy)=\chi_{\phantom{}_B}(zxy)(\Phi(x)+\Phi(y))=\chi_{\phantom{}_B}(yzx)\Phi(x)+ \chi_{\phantom{}_B}(zxy)\Phi(y)\\&=\alpha(x,yz)+\alpha(y,zx).
\end{align*}
So identity (\ref{70}) is verified. By Lemma~\ref{derivationform}, $D$ is a bounded derivation from $L^1(G,\omega)$ to $L^{\infty}\bigl(G,\frac1{\omega}\bigr)$.

We now show that for every $h\in L^1(G,\omega)$ the function $D(h)\in L^{\infty}(G,1/\omega)$ is continuous. Fix any $t_0\in G$ and let $C$ be a compact neighborhood of $t_0$. Let
$$
\beta(x)=\begin{cases}\Phi(x)h(x),&\,x\in C^{-1}B,\\ 0,&\,x\notin C^{-1}B.\end{cases}
$$
Then,
\[
 D(h)(t)=\int\limits_B\, \beta(t^{-1}\xi)\,d\xi = \int\limits_B{L_t(\beta)(\xi)}d\xi \quad (t\in C),
\]
where $L_t$ is the left translation operator.
Since $\Phi$ is continuous, $C^{-1}B$ is compact, $h\in L^1(G,\omega)$, and $\omega$ is bounded away from zero on compact sets, we have that $\beta\in L^1(G)$. Therefore, for $t\in C$ we have:
\begin{align*}
|D(h)(t)-D(h)(t_0)|&=\left|\int\limits_{B} \bigl(L_{t}\beta(\xi)- L_{t_0}\beta(\xi)\bigr)\,d\xi\right|\le \int\limits_{G} |L_{t}\beta(\xi)- L_{t_0}\beta(\xi)|\,d\xi\\
&=\|L_{t}\beta- L_{t_0}\beta\|_{L^1(G)}\to0\quad \text{as }t\to t_0.
\end{align*}
Hence, $D(h)$ is continuous at $t_0$. Since $t_0$ was taken arbitrarily, we conclude that $D(h)$ is a continuous function on $G$  for each $h\in L^1(G,\omega)$.

We are now ready to show that $D$ is not an inner derivation. Suppose, to the contrary, that there exists $f\in L^{\infty}\bigl(G,\frac1{\omega}\bigr)$ such that
\begin{equation}\label{63}
D(h)=f\cdot h-h\cdot f\quad (h\in L^1(G,\omega)).
\end{equation}
Fix any $t_0\in G$ and consider $h_0=\chi_{\phantom{}_{t_0^{-1}B}}\in L^1(G,\omega)$. Then
\begin{align*}
&D(h_0)(t_0)=(f\cdot h_0)(t_0)-(h_0\cdot f)(t_0)=\int\limits_{G} f(yt_0)h_0(y)\,dy-\int\limits_{G} f(t_0y)h_0(y)\,dy\\
&=\ \int\limits_{t_0^{-1}B} f(yt_0)\,dy-\int\limits_{t_0^{-1}B} f(t_0y)\,dy=
\int\limits_{t_0^{-1}Bt_0} f(y)\,dy-\int\limits_{B} f(y)\,dy=0.
\end{align*}
As we have already shown, $D(h_0)$ is a continuous function. It is also standard that $f\cdot h_0-h_0\cdot f$ is a continuous function when $f\in L^{\infty}\bigl(G,\frac1{\omega}\bigr)$ (see, for example, \cite[Proposition~7.17]{DL}). Therefore,
$$
0=D(h_0)(t_0)=\int\limits_B \Phi(t_0^{-1}\xi)h_0(t_0^{-1}\xi)\,d\xi=\int\limits_{B} \Phi(t_0^{-1}\xi)\,d\xi.
$$
Since $\Phi$ is a homomorphism, we obtain
$$
0=\int\limits_{B} \Phi(t_0^{-1}\xi)\,d\xi=\int\limits_B (\Phi(\xi)-\Phi(t_0))\,d\xi=\int\limits_B \Phi(\xi)\,d\xi-\Phi(t_0)\mu(B),$$
which implies that
$$
\Phi(t_0)=\frac{\int_B \Phi(\xi)\,d\xi}{\mu(B)},
$$
where $\mu$ denotes the Haar measure on $G$ ($\mu(B)>0$ since $B$ is a neighborhood of identity and thus contains an open subset). Because $t_0\in G$ was chosen arbitrarily, it follows that $\Phi$ is constant on $G$, which can happen for a homomorphism $\Phi$ only if $\Phi\equiv0$. This contradiction shows that $D$ is not an inner derivation. The proof is complete.

\end{proof}


Our next result provides another criterion to rule out weak amenability for a Beurling algebra. For the discrete case it was first obtained by Borwick in his Ph.D.~thesis \cite{Borwick} (see also \cite{Shepelska}), and has been used in \cite{Borwick} and \cite{Shepelska} to study weak amenability of Beurling algebras on discrete $SL_2(\bR)$, $\Ff_2$, and discrete $\boldsymbol{ax+b}$ group.

Let $G$ be a group. Recall that the conjugacy class of $x\in G$ is the set $C_x =\{gxg^{-1}:\, g\in G\}$. Given a subset $B$ of $G$, we denote
\[
C_B=\{gxg^{-1}: g\in G,\,x\in B\} = \bigcup\limits_{x\in B}C_x
\]
and call it the conjugacy class of $B$.

\begin{theorem}\label{conjugacygeneral}
Let $G$ be a locally compact group, $B\ne\emptyset$ be an open set in $G$ with compact closure, and $\omega$ be a weight on $G$ that is bounded away from zero on $C_B$, i.e., there is a constant $\delta>0$ such $\omega(x)\ge\delta$ for $x\in C_B$. Suppose that there exists a measurable function $\psi:G\to\mathbb{C}$ bounded on $B$ and such that
\begin{equation}\label{6}
\underset{x,y\in\,G}{\mathrm{ess\,sup}}\,\frac{|\psi(xy)-\psi(yx)|}{\omega(x)\omega(y)}<\infty\quad\text{and}
\end{equation}
\begin{equation}\label{7}
\underset{z\in\,C_B}{\mathrm{ess\,sup}}\,\frac{|\psi(z)|}{\omega(z)}=\infty.
\end{equation}
Then $L^1(G,\omega)$ is not weakly amenable.
\end{theorem}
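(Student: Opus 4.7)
The strategy is to feed $\psi$ into Lemma~\ref{derivationform} to manufacture a continuous derivation $D:L^1(G,\omega)\to L^{\infty}\bigl(G,\frac{1}{\omega}\bigr)$ and then leverage (\ref{7}) to show that $D$ cannot be inner.

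First I would set $\alpha(x,y):=\psi(yx)-\psi(xy)$. Condition (\ref{6}) says exactly that $\alpha\in\dLio$. A one-line associativity check verifies the cocycle identity (\ref{12}): both $\alpha(xy,z)$ and $\alpha(x,yz)+\alpha(y,zx)$ collapse to $\psi(zxy)-\psi(xyz)$. By Lemma~\ref{derivationform}, $\alpha$ determines a bounded derivation $D:L^1(G,\omega)\to L^{\infty}\bigl(G,\frac{1}{\omega}\bigr)$.

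Next, suppose for contradiction that $D=\mathrm{ad}_h$ for some $h\in L^{\infty}\bigl(G,\frac{1}{\omega}\bigr)$. Expanding $\langle g,f\cdot h-h\cdot f\rangle=\langle g*f,h\rangle-\langle f*g,h\rangle$ and applying the standard substitutions that produce a convolution shows that the cocycle of $\mathrm{ad}_h$ is $\alpha_h(x,y)=h(yx)-h(xy)$. By the uniqueness built into Lemma~\ref{derivationform}, $\alpha=\alpha_h$ locally a.e. Setting $\phi:=\psi-h$, this reads $\phi(yx)=\phi(xy)$ locally a.e.; substituting $u=g^{-1}x_0$, $v=g$ recasts this as essential conjugation invariance:
\[
\phi(g^{-1}x_0\,g)=\phi(x_0)\quad\text{for a.e.\ }(x_0,g)\in G\times G.
\]

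Finally, since $\psi$ is bounded on $B$ by hypothesis, $\omega$ is bounded on the compact set $\overline{B}$ by continuity, and $|h|\le\|h\|_{\infty,1/\omega}\cdot\omega$ a.e., we obtain $|\phi|\le C$ a.e.\ on $B$ for some constant $C$. The decisive step is to transfer this bound from $B$ to $C_B$. A Fubini argument on the set $F:=\{(g,y)\in G\times G:\ gyg^{-1}\in B,\ |\phi(y)|>C\}$ does this. For almost every $g$, the change of variables $y\mapsto gyg^{-1}$ (absolutely continuous with respect to Haar measure) identifies the $g$-slice $F_g$, up to null sets, with $\{x\in B:\ \phi(g^{-1}xg)=\phi(x),\ |\phi(x)|>C\}$, which has measure zero; hence $\mu(F)=0$. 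Computing $\mu(F)$ in the opposite order, the openness of $B$ forces $\{g:gyg^{-1}\in B\}$ to be a nonempty open (hence positive-measure) subset of $G$ for every $y\in C_B$, so $\mu(F)=0$ yields $|\phi|\le C$ a.e.\ on $C_B$. Combined with $\omega\ge\delta$ on $C_B$ and the a.e.\ bound on $h/\omega$, this gives $|\psi|/\omega$ essentially bounded on $C_B$, contradicting (\ref{7}). The main obstacle is exactly this transfer step: extracting an a.e.\ pointwise bound on the orbit set $C_B$ from an a.e.\ invariance statement on $G\times G$, which is where the openness of $B$ is essential.
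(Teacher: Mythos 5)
Your proposal is correct and follows essentially the same route as the paper: build the derivation from the cocycle $\psi(xy)-\psi(yx)$ via Lemma~\ref{derivationform}, assume it is inner, equate cocycles to get essential conjugation-invariance of $\psi$ minus the implementing function, and use the openness of $B$ together with the bounds on $\psi,\omega$ on $B$ and $\omega\ge\delta$ on $C_B$ to contradict (\ref{7}). Your explicit Fubini argument on the set $F$ merely spells out the measure-theoretic transfer from $B$ to $C_B$ that the paper leaves implicit in its final "which is a contradiction to condition~(\ref{7})" step.
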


\begin{proof}
Suppose that $\psi$ is a function satisfying all aforementioned conditions. Then $\Psi(x,y)=\psi(xy)-\psi(yx)$ is measurable on $G\times G$, and condition~(\ref{6}) ensures that $\Psi\in\dLio$. Moreover,
\begin{align*}
\Psi(xy,z)&=\psi(xyz)-\psi(zxy)=(\psi(xyz)-\psi(yzx))+(\psi(yzx)-\psi(zxy))\\
          &=\Psi(x,yz)+\Psi(y,zx) \qquad (x,y,z \in G).
\end{align*}
Then by Lemma~\ref{derivationform} $\Psi$ defines a continuous derivation $D: L^1(G,\omega)\to \Lio$ that satisfies
$$
\langle g,D(f)\rangle  =\int\limits_{G^2} (\psi(xy)-\psi(yx)) f(x) g(y)\,dxdy \quad (f,g\in L^1(G,\omega)).
$$
We show that this derivation $D$ is not inner, which will imply that $\L1o$ is not weakly amenable.

Suppose, to the contrary, that $D$ is inner. Then there exists a function $\varphi\in \Lio$ such that
$$
D(f)=\varphi\cdot f-f\cdot\varphi\quad (f\in L^1(G,\omega)).
$$
It follows that
\[
\langle g,D(f)\rangle =\int\limits_{G^2} (\varphi(xy)-\varphi(yx)) f(x)g(y)\,dxdy \quad (f,g\in L^1(G,\omega)).
\]
Denote $\Phi(x,y) = \varphi(xy)-\varphi(yx)$. Then $\Phi \in \dLio$ and
\[
\la f\otimes g, \Psi-\Phi \ra = 0 \quad (f,g\in \L1o).
\]
Therefore, $\Psi = \Phi$ as the elements of $\dLio$. We then have
\[
\int\limits_{G^2} (\Psi(x,y)-\Phi(x,y)) U(x,y)\,dxdy =0\quad (U\in \Ld1o).
\]
On the other hand, if $U$ is in $\Ld1o$, then so is the function $\chi_{\phantom{}_{B}}(xy)U(x,y)$. Hence,
\[
\int\limits_{G^2} (\Psi(x,y)-\Phi(x,y))\chi_{\phantom{}_{B}}(xy) U(x,y)\,dxdy =0.
\]
 In particular, the last equality holds for all $U$ in $C_{00}(G\times G)$, the space of continuous functions with compact support. For any $U\in C_{00}(G\times G)$, let $V(x,y) = U(x, xy)$. It is evident that $V\in C_{00}(G\times G)$. Thus,
\begin{align*}
0 &= \int\limits_{G^2} (\Psi(x,y)-\Phi(x,y)) \chi_{\phantom{}_{B}}(xy) V(x,y)\,dxdy\\ &= \int\limits_{G\times B} (\Psi(x,x^{-1}y)-\Phi(x,x^{-1}y)) U(x,y)\,dxdy
\end{align*}
for all $U\in C_{00}(G\times G)$. Since $C_{00}(G\times G)$ is dense in $\Ld1o$, we have
\[
\int\limits_{G\times B} (\Psi(x,x^{-1}y)-\Phi(x,x^{-1}y)) U(x,y)\,dxdy =0 \quad (U\in \Ld1o).
\]
This implies that $\Psi(x,x^{-1}y)-\Phi(x,x^{-1}y) = 0$ locally almost everywhere on $G\times B$, i.e.,
\[
\psi(x^{-1}yx) = \psi(y) -\varphi(y) + \varphi(x^{-1}yx) \quad (\text{locally a.e. on }G\times B).
\]
Dividing both sides by $\omega(x^{-1}yx)$ and noting that
\[
\underset{(x,y)\in G\times G}{\mathrm{ess\,sup}}\,\frac{|\varphi(x^{-1}yx)|}{\omega(x^{-1}yx)} = \|\varphi\mon,
\]
we obtain
\[
\frac{|\psi(x^{-1}yx)|}{\omega(x^{-1}yx)} \leq \frac{\omega(y)\|\varphi\mon + |\psi(y)|}{\omega(x^{-1}yx)} + \|\varphi\mon\quad (\text{locally a.e.}\ (x,y)\in G\times B).
\]
Since $\|\varphi\mon < \infty$, $\psi$ and $\omega$ are bounded on $B$, and $\omega$ is bounded away from zero on $C_B$, we derive
\[
\underset{(x,y)\in G\times B}{\mathrm{ess\,sup}}\, \frac{|\psi(x^{-1}yx)|}{\omega(x^{-1}yx)} < \infty,
\]
which is a contradiction to condition~(\ref{7}). Therefore, $D$ is not inner. The proof is complete.

\end{proof}

As an application of Theorem~\ref{conjugacygeneral}, let us consider the topological Heisenberg group. Recall that the Heisenberg group $G_H$ is a 3-dimensional Lie group consisting of all $3\times 3$ matrices of the form
$$
\left ( \begin{array}{ccc}
1 & u & w \\
0 & 1 & v \\
0 & 0 & 1 \end{array} \right )\quad
(u,v,w\in\mathbb{R}).
$$
It is a unimodular locally compact group with the ordinary Euclidean norm topology and the Lebesgue measure of $\mathbb{R}^3$ as a Haar measure (see \cite[Section~12.1.18]{Palmer2}). To simplify the notation, we represent the elements of $G_H$ by $(u,v,w)$ so that $G_H=\mathbb{R}^3$ with the product and inverse operations given by
\begin{equation}\label{58}
(u,v,w)(a,b,c)=(u+a,v+b,w+c+ub),\quad (u,v,w)^{-1}=(-u,-v,uv-w).
\end{equation}

\begin{proposition}\label{Heisenberg}
Let $\omega$ be a weight on $G_H$ of the form

\[
\omega(u,v,w) = W(|u|,|v|) \quad\left((u,v,w)\in G_H\right).
\]
 Suppose that
 \beq\label{unbounded W}
 \lim_{(x,y)\to \infty}W(x,y) = \infty.
 \eeq
Then $L^1(G_H,\omega)$ is not weakly amenable.
\end{proposition}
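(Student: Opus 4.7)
The plan is to apply Theorem~\ref{conjugacygeneral}. Take $B=(-\delta,\delta)^3$ with $\delta\in(0,1)$ and use the group law~(\ref{58}) to compute
\[
(a,b,c)(u,v,w)(a,b,c)^{-1}=(u,v,w+av-ub),
\]
so conjugation fixes the first two coordinates while the third coordinate sweeps out all of $\mathbb{R}$ whenever $(u,v)\ne(0,0)$. Consequently
\[
C_B\supseteq\{(u,v,w):0<\max(|u|,|v|)<\delta,\;w\in\mathbb{R}\},
\]
and since $\omega(u,v,w)=W(|u|,|v|)$ depends only on coordinates lying in the compact square $[-\delta,\delta]^2$ on $C_B$, continuity and positivity of $W$ guarantee that $\omega$ is bounded above and below by positive constants on $C_B$.

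I will seek $\psi$ of the form $\psi(u,v,w)=\eta(u,v)\,F(w)$, where $\eta\in C_c^\infty(\mathbb{R}^2)$ has compact support $K\subset[-1,1]^2$ and satisfies $\eta(u_0,v_0)=1$ at some fixed $(u_0,v_0)\in(-\delta,\delta)^2\setminus\{(0,0)\}$, and $F\colon\mathbb{R}\to\mathbb{R}$ is a continuous unbounded function to be chosen. Condition~(\ref{7}) is automatic: along $\{(u_0,v_0,w):w\in\mathbb{R}\}\subset C_B$ one has $|\psi|/\omega=|F(w)|/W(|u_0|,|v_0|)\to\infty$. A direct computation yields
\[
\psi(xy)-\psi(yx)=\eta(u+a,v+b)\bigl[F(w+c+ub)-F(w+c+av)\bigr].
\]
On $K$, writing $(\alpha,\beta):=(u+a,v+b)$, we get $|ub-av|=|u\beta-\alpha v|\le|u|+|v|$. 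Submultiplicativity of $W$ on the Abelianization $\mathbb{R}^2$ gives $W(|a|,|b|)\ge W(|u|,|v|)/C_K$ on $K$ (with $C_K:=\max_K W$), so $W(|u|,|v|)W(|a|,|b|)\ge W(|u|,|v|)^2/C_K$, and the quantity controlling condition~(\ref{6}) is at most
\[
\frac{C_K\|\eta\|_\infty\,|F(w+c+ub)-F(w+c+av)|}{W(|u|,|v|)^2}.
\]

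The remaining step, and the main technical obstacle, is to construct $F$ so that this expression is bounded. Define $W^*(r):=\inf\{W(|u|,|v|):|u|+|v|\ge r\}$, which tends to infinity by hypothesis and is nondecreasing and submultiplicative in view of the submultiplicativity of $W$. Choose $r_n\nearrow\infty$ inductively so that $W^*(r_n-r_{n-1})^2\ge n$ with the gaps $r_n-r_{n-1}$ nondecreasing (possible since $W^*(r)\to\infty$), let $F$ be the piecewise-linear function on $[0,\infty)$ with $F(r_n)=n$, and extend evenly to $\mathbb{R}$. A direct check shows $|F(s+h)-F(s)|\le C\,W^*(|h|)^2$ for all $s,h\in\mathbb{R}$. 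Combined with $|ub-av|\le|u|+|v|$ and the trivial inequality $W^*(|u|+|v|)\le W(|u|,|v|)$, this yields
\[
|F(w+c+ub)-F(w+c+av)|\le C\,W^*(|u|+|v|)^2\le C\,W(|u|,|v|)^2,
\]
so the displayed quotient is bounded and condition~(\ref{6}) holds. Theorem~\ref{conjugacygeneral} then delivers that $L^1(G_H,\omega)$ is not weakly amenable.
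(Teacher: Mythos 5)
Your proposal is correct and follows essentially the same route as the paper: both apply Theorem~\ref{conjugacygeneral} with a $\psi$ supported on the conjugacy class of a small box, exploiting that $\boldsymbol{x}\boldsymbol{y}$ and $\boldsymbol{y}\boldsymbol{x}$ agree in the first two coordinates and differ in the central one by $ub-av$, which is dominated by $|u|+|v|$ (and by $|a|+|b|$), together with a one-variable nondecreasing submultiplicative minorant of $W$ (your $W^*$ is the paper's $\tilde\omega$). The only difference is technical: the paper takes $\psi=\chi_{C_B}\ln\tilde\omega(w)$, for which the Lipschitz-type estimate $|\ln\tilde\omega(s+h)-\ln\tilde\omega(s)|\le\ln\tilde\omega(|h|)\le\tilde\omega(|h|)$ is immediate from submultiplicativity, whereas your piecewise-linear concave $F$ requires the extra inductive construction (which does check out, with $F(t)\le 2W^*(t)^2$ following from $W^*(r_{n-1})^2\ge n-1$).
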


\begin{proof}
Consider $B=\{(u,v,w): |u|<1,\,|v|<1,\,|w|<1\}$. Then $B$ is an open set in $G_H$ with compact closure. From (\ref{58}) we have
$$
(u,v,w)(a,b,c)(u,v,w)^{-1}=(a,b,c+ub-va).
$$
Therefore $C_B=\{(u,v,w): |u|<1,\,|v|<1,\,w\in\mathbb{R}\}$. Since $\omega>0$ is continuous and depends only on the first two variables, it is obviously both bounded and bounded away from zero on $C_B$. Consider
$$
\tilde{\omega}(t)=\inf\{ W(u,v): u\geq 0, v\geq 0, u+v>|t|\}.
$$
It is readily seen that $\tilde\omega$ is a positive increasing unbounded continuous function on~$\bR$ and  $\tilde\omega(-t) = \tilde\omega(t)$ ($t\in\mathbb{R}$). Moreover, $\tilde\omega$ is a weight on $(\bR,+)$. To see this we note that if $u_i, v_i \geq 0$, $t_i\in \bR$ and $u_i + v_i > |t_i|$ ($i = 1,2$), then
\[
\tilde\omega(t_1 + t_2) \leq W(u_1+u_2, v_1+v_2) \leq W(u_1,v_1) W(u_2, v_2).
\]
Taking infimum on the right side over all possible $(u_1,v_1)$ and $(u_2,v_2)$, we derive the desired inequality
\[
\tilde\omega(t_1 + t_2) \leq \tilde\omega(t_1)\tilde\omega(t_2) \quad (t_1,t_2 \in \bR).
\]
Let
\[
\psi(u,v,w) = \chi_{_{C_B}}(u,v,w)\,\ln \tilde\omega(w) \quad\left((u,v,w)\in G_H\right),
\]
where $\chi_{_{C_B}}$ is the characteristic function of $C_B$. We aim to show that $\psi$ satisfies all the conditions of Theorem~\ref{conjugacygeneral}. It is readily seen that $\psi$ is a locally bounded measurable function on $G_H$ which is unbounded on $C_B$ by (\ref{unbounded W}). Since $\omega$ is bounded on $C_B$, it follows that $\psi$ satisfies condition~(\ref{7}). To show that (\ref{6}) is satisfied, we let $\boldsymbol{x} = (u,v,w)\in G_H$ and $\boldsymbol{y} = (a,b,c)\in G_H$. Then $\boldsymbol{x}\boldsymbol{y}$ and $\boldsymbol{y}\boldsymbol{x}$ belong to the same conjugacy class. If $\boldsymbol{x}\boldsymbol{y}\notin C_B$, then $\boldsymbol{y}\boldsymbol{x}\notin C_B$ and condition~(\ref{6}) is obviously satisfied. Assume now that $\boldsymbol{x}\boldsymbol{y},\boldsymbol{y}\boldsymbol{x}\in C_B$. Then
\begin{equation}\label{20}
|\psi(\boldsymbol{x}\boldsymbol{y}) - \psi(\boldsymbol{y}\boldsymbol{x})| = \left|\ln\frac{\tilde{\omega}(w+c+ub)}{\tilde{\omega}(w+c+av)}\right| \le|\ln\tilde{\omega}(|ub-av|)|=\ln\tilde{\omega}(|ub-av|).
\end{equation}
To obtain the last inequality, we used symmetry and submultiplicativity of $\tilde\omega$ together with the fact that $\tilde\omega\ge1$ as a symmetric weight function.
 Since  $\boldsymbol{x}\boldsymbol{y}\in C_B$, we have that $|u+a|<1$ and $|v+b|<1$. So,
\[
|ub-av|=|(u+a)b-a(v+b)| \leq |a| + |b|.
\]
Similarly, $|ub-av|\leq |u| + |v|$. Then the monotonicity of $\tilde\omega$ implies
\begin{align*}
\ln\tilde{\omega}(|ub-av|) &\leq \frac{1}{2}\ln\left(\tilde{\omega}(|a|+|b|)\, \tilde{\omega}(|u|+|v|)\right) \\
                            &\leq \frac{1}{2}\ln\left(W(|a|,|b|)\, W(|u|,|v|)\right) = \frac{1}{2}\ln \left(\omega(\boldsymbol{x})\omega(\boldsymbol{y})\right)\leq \frac{1}{2}\omega(\boldsymbol{x})\omega(\boldsymbol{y}).
\end{align*}
In the last step we used the fact that $\omega \geq 1$, which is true since $\omega$ is a symmetric weight by the assumption. Combining the last inequality with (\ref{20}), we see that $\psi$ satisfies condition~(\ref{6}). By Theorem~\ref{conjugacygeneral}, $L^1(G_H,\omega)$ is not weakly amenable, and the proof is complete.

\end{proof}

It is readily seen that the function $\omega_{\alpha}(u,v,w)=(1+|u|+|v|)^{\alpha}$ 
is a weight on $G_H$ satisfying the condition of Proposition~\ref{Heisenberg}. So we have
\begin{example}
 The Beurling algebra $L^1(G_H,\omega_{\alpha})$ 
is not weakly amenable for any $\al>0$.
\end{example}

It is worth to restate Theorem~\ref{conjugacygeneral} for the discrete group case. We will use this discrete version to study weak amenability of $\ell^1(\boldsymbol{ax+b},\omega)$ in Section~\ref{affine}.

\begin{corollary}\label{conjugacy}
Let $G$ be a discrete group, $B\ne \emptyset$ be a finite set in $G$, and $\omega$ be a weight on $G$ that is bounded away from zero on the conjugacy class $C_B$. Suppose that there exists a function $\psi:G\to\mathbb{R}$ and a constant $c>0$ such that
\begin{equation}\label{40}
|\psi(xy)-\psi(yx)|\le c\,\omega(x)\omega(y)\quad (x,y\in G)\quad \text{and}
\end{equation}
\begin{equation}\label{39}
\sup_{z\in C_B}\,\frac{|\psi(z)|}{\omega(z)}=\infty.
\end{equation}
Then $\ell^1(G,\omega)$ is not weakly amenable.
\end{corollary}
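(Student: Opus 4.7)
The plan is to deduce this corollary as a direct specialization of Theorem~\ref{conjugacygeneral} to the discrete setting, so essentially all the work consists of verifying that each hypothesis of that theorem is implied by the hypotheses listed here once $G$ is given the discrete topology.

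First I would note the basic topological facts: when $G$ is discrete, every subset is open and measurable, every function is measurable, and compact subsets of $G$ are exactly the finite subsets. The Haar measure is counting measure, so essential suprema coincide with pointwise suprema. Consequently the finite nonempty set $B$ plays the role of the open set with compact closure required by Theorem~\ref{conjugacygeneral}, and $\psi$ is automatically a measurable function which, being defined on the finite set $B$, is bounded there.

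Next I would translate the remaining hypotheses. Condition (\ref{40}) gives
\[
\underset{x,y\in G}{\mathrm{ess\,sup}}\,\frac{|\psi(xy)-\psi(yx)|}{\omega(x)\omega(y)}\le c<\infty,
\]
which is exactly condition (\ref{6}); and (\ref{39}) becomes (\ref{7}) verbatim once essential sup is replaced by sup. The assumption that $\omega$ is bounded away from zero on $C_B$ is identical in both statements. Therefore all hypotheses of Theorem~\ref{conjugacygeneral} are met, and the conclusion that $\ell^1(G,\omega)=L^1(G,\omega)$ is not weakly amenable follows immediately.

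There is no genuine obstacle in this argument beyond a careful bookkeeping of the dictionary between the general locally compact setting and the discrete one; the only point that deserves explicit mention is that in the discrete case ``bounded on the finite set $B$'' and ``measurable'' are both automatic, so the technical conditions imposed in Theorem~\ref{conjugacygeneral} reduce to the two cleaner inequalities (\ref{40}) and (\ref{39}) stated here.
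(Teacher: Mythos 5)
Your proposal is correct and matches the paper's intent exactly: the paper states this corollary with no written proof precisely because it is the immediate specialization of Theorem~\ref{conjugacygeneral} to the discrete case, where $B$ finite gives an open set with compact closure, counting measure makes essential suprema into suprema, and measurability and boundedness of $\psi$ on $B$ are automatic. Nothing further is needed.
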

\qed

For a discrete group $G$, Lemma~\ref{derivationform} ensures that each bounded derivation $D$: $\l1o \to \lio$ gives rise to a function $\al\in \liod$ such that
\[
\al(xy,z) =\al(x,yz) + \al(zx,y) \text{\, and \,}  D(\de_x)(y) = \al(x,y) \quad (x,y,z\in G).
\]
With an additional assumption we can derive further that $D$ must be in the form
\[
D(\de_x) = f\cdot \de_x - \de_x\cdot f, \text{\,\, i.e., \,\,} \al(x,y) = f(xy) - f(yx) \quad (x,y\in G)
\]
for some function $f$ on $G$. We note that although $\al\in \liod$, in general one cannot expect that $f\in \lio$, which happens only when $D$ is an inner derivation.

\begin{lemma}\label{tech2}
Let $G$ be a discrete group, $\omega$ be a weight on $G$, and $D:\ell^1(G,\omega)\to\ell^{\infty}\bigl(G,\frac1{\omega}\bigr)$ be a bounded derivation. If $D(\delta_x)(y)=0$ for all commuting elements $x,y\in G$, then there exists a function $f$ on $G$ such that
\begin{equation}\label{57}
D(\delta_x)(y)=f(xy)-f(yx)\quad (x,y\in G).
\end{equation}
\end{lemma}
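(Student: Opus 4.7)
The plan is to translate the hypothesis into a statement about the cocycle attached to $D$ and then construct $f$ one conjugacy class at a time. By Lemma~\ref{derivationform}, writing $\alpha(x,y) := D(\delta_x)(y)$ we get a function $\alpha\in\liod$ satisfying the cocycle identity
\[
\alpha(xy,z)=\alpha(x,yz)+\alpha(y,zx)\quad (x,y,z\in G),
\]
and the hypothesis reads simply $\alpha(x,y)=0$ whenever $xy=yx$. First I would extract three easy consequences: taking $y=e$ gives $\alpha(x,e)=0$; taking $x=e$ gives $\alpha(e,u)=0$; and putting $z=e$ in the cocycle identity gives $0=\alpha(xy,e)=\alpha(x,y)+\alpha(y,x)$, so $\alpha$ is antisymmetric.

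Next I would fix a set of representatives $\{u_{0}\}$ of the conjugacy classes of $G$ and, for each $u$ conjugate to $u_{0}$, pick some $g\in G$ with $u=gu_{0}g^{-1}$. Define
\[
f(u):=\alpha(g,u_{0}g^{-1}).
\]
The main obstacle here is well-definedness. If also $u=g'u_{0}(g')^{-1}$, then $c:=g^{-1}g'$ lies in the centralizer $C_{G}(u_{0})$ and $g'=gc$. Applying the cocycle identity with $(x,y,z)=(g,c,u_{0}c^{-1}g^{-1})$ yields
\[
\alpha(gc,u_{0}c^{-1}g^{-1})=\alpha(g,c u_{0}c^{-1}g^{-1})+\alpha(c,u_{0}c^{-1}g^{-1}g).
\]
Since $c$ commutes with $u_{0}$, the first summand collapses to $\alpha(g,u_{0}g^{-1})$, while in the second summand $c$ commutes with $u_{0}c^{-1}=c^{-1}u_{0}$, so the hypothesis forces $\alpha(c,u_{0}c^{-1})=0$. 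This will show that $f$ is well-defined on each conjugacy class; choosing $f$ arbitrarily (say zero) off the union of classes I care about, I get a function on all of $G$.

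Finally I would verify the target identity $\alpha(x,y)=f(xy)-f(yx)$. The elements $xy$ and $yx$ lie in the same conjugacy class via $xy=x(yx)x^{-1}$. Writing $yx=gu_{0}g^{-1}$, we get $xy=(xg)u_{0}(xg)^{-1}$, and the definition of $f$ gives
\[
f(xy)-f(yx)=\alpha(xg,u_{0}g^{-1}x^{-1})-\alpha(g,u_{0}g^{-1}).
\]
Applying the cocycle identity with $(x,y,z)=(x,g,u_{0}g^{-1}x^{-1})$ produces $\alpha(xg,u_{0}g^{-1}x^{-1})=\alpha(x,gu_{0}g^{-1}x^{-1})+\alpha(g,u_{0}g^{-1})$, and since $gu_{0}g^{-1}x^{-1}=(yx)x^{-1}=y$, the right-hand side simplifies to $\alpha(x,y)+\alpha(g,u_{0}g^{-1})$. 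Subtracting yields exactly $\alpha(x,y)$, completing the plan. The only place the commuting hypothesis is really used is to kill the ambiguity in the conjugator (via $\alpha(c,u_{0}c^{-1})=0$); everything else is bookkeeping with the cocycle identity.
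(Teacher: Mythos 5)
Your proposal is correct and follows essentially the same route as the paper: the cocycle identity you invoke is just the derivation identity $D(\delta_{ab})(w)=D(\delta_a)(bw)+D(\delta_b)(wa)$ in disguise, your $f(u)=\alpha(g,u_0g^{-1})$ coincides (via the antisymmetry $\alpha(x,y)=-\alpha(y,x)$) with the paper's $f(gu_0g^{-1})=-D(\delta_{u_0g^{-1}})(g)$, and both well-definedness arguments reduce the ambiguity in the conjugator to the vanishing of $\alpha$ on a commuting pair. The remaining differences are only bookkeeping.
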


\begin{proof}
Since every element commutes with the unit $e$, from our assumption it follows that $D(\de_x)(e) = D(\de_e)(x) = 0$ for all $x\in G$. In particular, $D(xy)(e) =0$, which implies that $D(\de_x)(y) = - D(\de_y)(x)$ for all $x,y\in G$.

We note that $G$ is the disjoint union of all conjugacy classes. To construct $f$ we consider each conjugacy class separately. Let $x_0\in G$ be fixed. Define $f$ on $C_{x_0} =\{yx_0y^{-1}: y\in G\}$ as follows:
\[
f(yx_0y^{-1})=-D(\delta_{x_0y^{-1}})(y)\quad (y\in G).
\]
We first clarify that $f$ is well-defined. Suppose that $u\in C_{x_0}$ has two representations $u=yx_0y^{-1}=zx_0z^{-1}$. Then $x_0y^{-1}=y^{-1}zx_0z^{-1}$. Using the derivation identity, we obtain
\begin{align*}
{D(\delta_{x_0y^{-1}})(y)}&=D(\delta_{(y^{-1}z)(x_0z^{-1})})(y)=\left(D(\delta_{y^{-1}z})\cdot\delta_{x_0z^{-1}} +\delta_{y^{-1}z}\cdot D(\delta_{x_0z^{-1}})\right)(y)\\
                          &=D(\delta_{y^{-1}z})(x_0z^{-1}y)+{D(\delta_{x_0z^{-1}})(z)}.
\end{align*}
Since $yx_0y^{-1}=zx_0z^{-1}$, it is readily seen that the elements $y^{-1}z$ and $x_0z^{-1}y$ commute. By assumption, we then  have $D(\delta_{y^{-1}z})(x_0z^{-1}y) =0$.
Thus,
\[
{D(\delta_{x_0y^{-1}})(y)} = {D(\delta_{x_0z^{-1}})(z)}.
\]
This shows that the function $f$ is well-defined on $C_{x_0}$, so it is well-defined on the whole $G$. (Here, of course, the Axiom of Choice is assumed.) We now prove (\ref{57}). For any $x,y\in G$ the elements $xy$ and $yx$ belong to the same conjugacy class, say~$C_{x_0}$. Let $xy = ax_0a^{-1}$. Then
\begin{align*}
f(xy) &= -D(\de_{x_0a^{-1}})(a) = D(\de_a)(x_oa^{-1}),\\
f(yx) &= f(yax_0(ya)^{-1}) = D(\de_{ya})(x_0a^{-1}y^{-1}) = D(\de_a)(x_0a^{-1}) + D(\de_y)(x).
\end{align*}
In the last step we used the relation $ax_0a^{-1}y^{-1} = x$. Therefore,
\[
f(xy) - f(yx) = -D(\de_y)(x) = D(\de_x)(y).
\]
The proof is complete.

\end{proof}

\begin{proposition}\label{almostsufficient}
Let $G$ be a discrete group and $\omega$ be a weight on $G$ such that
$$
\sup\limits_{n\in\,\mathbb{N}}\,\frac{n}{\omega(x^n)\omega(x^{-n})}=\infty\quad (x\in G).
$$
Then for every bounded derivation $D:\ell^1(G,\omega)\to\lio$ there exists a function $f$ on $G$ such that
\[
D(\delta_x)(y)=f(xy)-f(yx)\quad (x,y\in G).
\]
\end{proposition}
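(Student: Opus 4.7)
My plan is to reduce to Lemma~\ref{tech2} by showing that the hypothesis on $\omega$ forces $D(\delta_x)(y) = 0$ for every pair of commuting elements $x,y \in G$; once this is established, the required function $f$ is supplied directly by Lemma~\ref{tech2}.

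The key step is the identity
$$D(\delta_{x^n})(y) = n\, D(\delta_x)(x^{n-1}y) \qquad (n\geq 1),$$
valid whenever $xy = yx$. I would prove it by setting $\phi_m := D(\delta_{x^m})(x^{n-m}y)$ for $0 \le m \le n$ and checking the telescoping recursion $\phi_m - \phi_{m-1} = D(\delta_x)(x^{n-1}y)$. Writing $\delta_{x^m} = \delta_x * \delta_{x^{m-1}}$ and applying the derivation identity yields
$$\phi_m = D(\delta_{x^{m-1}})\bigl(x^{n-m}y\cdot x\bigr) + D(\delta_x)\bigl(x^{m-1}\cdot x^{n-m}y\bigr).$$
The relation $yx = xy$ rewrites the first term as $D(\delta_{x^{m-1}})(x^{n-m+1}y) = \phi_{m-1}$ and the second as $D(\delta_x)(x^{n-1}y)$. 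Since $\phi_0 = D(\delta_e)(x^n y) = 0$ (the vanishing $D(\delta_e) = 0$ coming at once from $\delta_e * \delta_e = \delta_e$), iteration gives the identity.

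Next I would substitute $y \mapsto y x^{1-n}$, which still commutes with $x$. Commutativity collapses $x^{n-1} \cdot y x^{1-n}$ to $y$, so
$$n\,D(\delta_x)(y) = D(\delta_{x^n})(y x^{1-n}).$$
Boundedness of $D: \ell^1(G,\omega)\to\ell^{\infty}(G,1/\omega)$ combined with the submultiplicativity $\omega(y x^{1-n}) \le \omega(y)\omega(x)\omega(x^{-n})$ gives
$$|D(\delta_x)(y)| \leq \|D\|\,\omega(x)\,\omega(y)\cdot \frac{\omega(x^n)\omega(x^{-n})}{n}.$$
The assumption $\sup_n n/(\omega(x^n)\omega(x^{-n})) = \infty$ is exactly the statement that $\inf_n \omega(x^n)\omega(x^{-n})/n = 0$, so choosing $n$ along an appropriate subsequence forces $D(\delta_x)(y) = 0$. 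Applying Lemma~\ref{tech2} then completes the proof.

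The only delicate point is tracking where the commutativity of $x$ and $y$ enters, both in the telescoping recursion and in the substitution $y \mapsto y x^{1-n}$; the final estimate is immediate from submultiplicativity and the boundedness of~$D$.
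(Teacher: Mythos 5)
Your proposal is correct and follows essentially the same route as the paper: the paper also reduces to Lemma~\ref{tech2} and proves by induction that $D(\delta_{x^n})(yx^{1-n}) = n\,D(\delta_x)(y)$ for commuting $x,y$, then uses boundedness of $D$ together with $\omega(yx^{1-n})\le\omega(yx)\omega(x^{-n})$ and the hypothesis on $\omega$ to force $D(\delta_x)(y)=0$. Your telescoping identity is just a repackaging of that induction, and your direct estimate replaces the paper's argument by contradiction; both are sound.
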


\begin{proof}
Due to Lemma~\ref{tech2}, it suffices to show that $D(\delta_x)(y)=0$ for all bounded derivations $D$: $\l1o \to \lio$ and all commuting elements $x,y\in G$. Suppose, to the contrary, that $xy=yx$ and $D(\delta_x)(y)=c\ne0$ for some bounded derivation $D$. Then, by induction, we have
\begin{equation}\label{29}
D(\delta_{x^{n}})(yx^{1-n})=cn\quad (n\in\mathbb{N}).
\end{equation}
In fact, this is trivial for $n=1$. Now assume that (\ref{29}) holds for $n\in\mathbb{N}$. Then
\begin{align*}
&D(\delta_{x^{n+1}})(yx^{-n})= \left(D(\delta_x)\cdot\delta_{x^n}+\delta_x\cdot D(\delta_{x^n})\right)(yx^{-n})\\
&=D(\delta_x)(y)+D(\delta_{x^n})(yx^{1-n}) = c+cn = c(n+1).
\end{align*}
So (\ref{29}) holds for all $n\in \bN$.
It then follows that
\begin{align*}
\|D\|&\ge \sup_{n\in\,\mathbb{N}}\,\frac{\|D(\delta_{x^n})\|_{\ell^{\infty}(G,1/\omega)}}{\|\delta_{x^{n}}\|_ {\ell^1(G,\omega)}}
\ge\sup_{n\in\,\mathbb{N}}\,\frac{\frac{|D(\delta_{x^n})(yx^{1-n})|}{\omega(yx^{1-n})}}{\omega(x^n)}\\
 & = \sup_{n\in\,\mathbb{N}}\,\frac{|c|n}{\omega((yx)x^{-n})\omega(x^n)}\ge \sup_{n\in\,\mathbb{N}}\,\frac{|c|n}{\omega(yx)\omega(x^{-n})\omega(x^n)}\\
&=\frac{|c|}{\omega(yx)}\,\sup\limits_{n\in\,\mathbb{N}}\,\frac{n}{\omega(x^{-n})\omega(x^n)}=\infty
\end{align*}
due to the condition on $\omega$. This contradicts to the boundedness of $D$. The proof is complete.

\end{proof}

\begin{remark}
Taking into account Lemma~\ref{derivationform}, we see that the function $f$ ensured in Lemma~\ref{tech2} and Proposition~\ref{almostsufficient} satisfies
\[
\sup_{x,y\in G}\frac{|f(xy) - f(yx)|}{\omega(x)\omega(y)} < \infty.
\]
\end{remark}

\section{The affine motion group}\label{affine}

In this section we consider the $\boldsymbol{ax+b}$ group
of all affine transformations $x\mapsto ax+b$ of~$\mathbb{R}$ with $a>0$ and $b\in\mathbb{R}$. Precisely, $\boldsymbol{ax+b} = \{(a,b): \, a\in \bR^+, b\in \bR\}$ with product and inverse given by
$$
(a,b)(c,d)=(ac,ad+b),\quad (a,b)^{-1}=\left(\frac1a,\frac{-b}{a}\right) \quad (a,c \in \bR^+,\,b,d\in\mathbb{R}).
$$
With the Euclidean metric topology inherited from $\bR^2$, $\boldsymbol{ax+b}$ is a locally compact group whose left Haar measure is $da\,db/a^2$. 

Lets consider the function $\omega_{\alpha}(a,b)=(1+a+|b|)^{\alpha}$ on $\boldsymbol{ax+b}$ ($\alpha>0$). For $(a,b),(c,d) \in \boldsymbol{ax+b}$ we have
\begin{align*}
\omega_{\alpha}\bigl((a,b)(c,d)\bigr) &=\omega_{\alpha}(ac,ad+b)=(1+ac+|ad+b|)^{\alpha}\le(1+|b|+ac+a|d|)^{\alpha}\\ &\le (1+a+|b|)^{\alpha}(1+c+|d|)^{\alpha}=\omega_{\alpha}(a,b)\omega_{\alpha}(c,d).
\end{align*}
This shows that $\omega_\al$ is indeed a (continuous) weight on $\boldsymbol{ax+b}$.

\begin{proposition}
Let $\omega_{\alpha}$ ($\alpha>0$) be the weight on $\boldsymbol{ax+b}$ defined as above. Then $L^1(\boldsymbol{ax+b},\omega_{\alpha})$ is not weakly amenable.
\end{proposition}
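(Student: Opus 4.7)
My plan is to apply Lemma~\ref{derivationform} directly, by exhibiting a cocycle $\alpha\in L^{\infty}\bigl(G\times G,\tfrac{1}{\omega_\alpha\times\omega_\alpha}\bigr)$ satisfying (\ref{12}) but not of the coboundary form $\varphi(xy)-\varphi(yx)$ for any $\varphi\in L^{\infty}(G,1/\omega_\alpha)$. Writing $x=(a_x,b_x)$ and $y=(a_y,b_y)$ for elements of $\boldsymbol{ax+b}$, I would set
\[
\alpha(x,y)=(\ln a_x)\,(a_x a_y)^{\eta}
\]
for a fixed $\eta\in(0,\alpha)$. This has the shape $\Phi(x)\,\beta(yx)$, where $\Phi(a,b)=\ln a$ is a continuous homomorphism from $\boldsymbol{ax+b}$ into $(\bR,+)$ and $\beta(c,d)=c^{\eta}$ is a class function (depending only on the first coordinate, which is constant on each conjugacy class). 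The cocycle identity (\ref{12}) then collapses to $\Phi(x)\bigl(\beta(zxy)-\beta(yzx)\bigr)=0$, which holds because the first coordinates of $zxy$ and $yzx$ both equal $a_x a_y a_z$.

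To confirm that $\alpha$ belongs to $L^{\infty}\bigl(G\times G,\tfrac{1}{\omega_\alpha\times\omega_\alpha}\bigr)$, I would use the estimate
\[
\frac{|\alpha(x,y)|}{\omega_\alpha(x)\omega_\alpha(y)}\le\frac{|\ln a_x|\,a_x^{\eta}}{(1+a_x)^{\alpha}}\cdot\frac{a_y^{\eta}}{(1+a_y)^{\alpha}}.
\]
Each univariate factor is continuous on $(0,\infty)$; it vanishes as its argument tends to $0$ (the factor $a^{\eta}$ with $\eta>0$ absorbs the logarithmic divergence of the first factor) and as the argument tends to $\infty$ (since $\eta<\alpha$ makes the polynomial denominator dominate). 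Hence both factors are bounded, so $\alpha$ is bounded; Lemma~\ref{derivationform} then yields a bounded derivation $D\colon L^{1}(\boldsymbol{ax+b},\omega_\alpha)\to L^{\infty}(\boldsymbol{ax+b},1/\omega_\alpha)$.

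To show that $D$ is not inner, I would suppose for contradiction that $\alpha(x,y)=\varphi(xy)-\varphi(yx)$ locally a.e.\ for some $\varphi\in L^{\infty}(\boldsymbol{ax+b},1/\omega_\alpha)$ and fix $a_x,a_y$ with $a_x a_y\ne 1$. The linear map $(b_x,b_y)\mapsto(a_x b_y+b_x,\,a_y b_x+b_y)$ has determinant $1-a_x a_y\ne0$, so it is a bijection of $\bR^{2}$; consequently, as $(b_x,b_y)$ ranges over $\bR^{2}$, the pair of second coordinates of $(xy,yx)$ fills up a.e.\ $\bR^{2}$, and
\[
\varphi(a_x a_y,s)-\varphi(a_x a_y,t)=(\ln a_x)(a_x a_y)^{\eta}
\]
holds for a.e.\ $(s,t)\in\bR^{2}$. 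Specialising $s=t$ forces the right-hand side to vanish for a.e.\ $(a_x,a_y)$, contradicting, e.g., the value $(\ln2)\cdot2^{\eta}\ne 0$ at $(a_x,a_y)=(2,1)$.

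The main obstacle I expect is the design of the cocycle rather than any particular verification. Theorem~\ref{conjugacygeneral} seems inapplicable here: the weight $\omega_\alpha$ is unbounded on every conjugation-saturation $C_B$ (the second coordinate is free under conjugation), in contrast to the Heisenberg setting where $\omega$ is independent of the conjugation-varying coordinate. Any $\psi$ satisfying (\ref{6}) for $\omega_\alpha$ appears forced to grow no faster than $\omega_\alpha$ along the non-compact conjugation direction, precluding (\ref{7}); the direct cocycle construction above sidesteps this obstruction, and the narrow window $\eta\in(0,\alpha)$ reflects the competing requirements needed to make $\alpha$ lie in the correct weighted $L^{\infty}$ space.
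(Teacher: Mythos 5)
Your construction is essentially correct and takes a genuinely different route from the paper. The paper does \emph{not} build a cocycle directly: it applies Theorem~\ref{conjugacygeneral} with $B=\{(a,b):1<a<2,\ 1<b<2\}$, $C_B=\{(a,b):1<a<2,\ b\in\bR\}$, and $\psi=\ln\Psi$ where $\Psi(a,b)=\max\{a-1,|b|\}$ on $1<a<2$ and $\Psi=1$ elsewhere. Your heuristic for why Theorem~\ref{conjugacygeneral} "seems inapplicable'' is therefore mistaken: condition~(\ref{7}) asks only that $|\psi|/\omega$ be essentially unbounded on $C_B$, and the paper achieves this not by growth at infinity along the conjugation direction but by a \emph{local} logarithmic singularity ($\psi(a,a-1)=\ln(a-1)\to-\infty$ as $a\to1^{+}$ while $\omega_\alpha$ stays bounded there); the key estimate $\Psi(yz)/\Psi(zy)\le\omega_1(y)\omega_1(z)$ is what makes (\ref{6}) work. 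Your direct approach buys a very transparent cocycle ($\alpha(x,y)=\Phi(x)\beta(yx)$ with $\Phi$ a homomorphism and $\beta$ a class function, so (\ref{12}) is an identity, and the window $0<\eta<\alpha$ gives membership in $L^{\infty}(G\times G,1/(\omega_\alpha\times\omega_\alpha))$ exactly as you compute), at the price of a hands-on non-innerness argument; the paper's approach buys reusability of a general criterion at the price of a less obvious auxiliary function.

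One step of your non-innerness argument needs repair. From $\varphi(a_xa_y,s)-\varphi(a_xa_y,t)=(\ln a_x)(a_xa_y)^{\eta}$ for a.e.\ $(s,t)\in\bR^2$ you cannot "specialise $s=t$'': the diagonal is a null set, so the a.e.\ identity says nothing there. Instead, fix a generic $(a_x,a_y)$ with $a_x\ne1$ and $a_xa_y\ne1$ (the excluded set is null) and put $h(s)=\varphi(a_xa_y,s)$, $c=(\ln a_x)(a_xa_y)^{\eta}\ne0$. By Fubini, for a.e.\ $t$ one has $h(s)=h(t)+c$ for a.e.\ $s$; comparing two such admissible values $t_1,t_2$ shows $h(t_1)=h(t_2)$, so $h$ is a.e.\ constant, whence $h(s)-h(t)=0\ne c$ for a.e.\ $(s,t)$ --- the desired contradiction. (For the same reason, the phrase "the value at $(a_x,a_y)=(2,1)$'' should be replaced by a generic point near $(2,1)$.) With that adjustment, and noting that Haar measure $da\,db/a^2$ has the same null sets as Lebesgue measure so your determinant argument does transfer, your proof is complete.
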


\begin{proof}
Clearly, $\omega_{\alpha} \ge 1$ on $\boldsymbol{ax+b}$. 
Let $B=\{(a,b): 1<a<2,\,1<b<2\}$. Then $B$ is open and $\overline{B}$ is compact in $\boldsymbol{ax+b}$. Since
$$
(c,d)(a,b)(c,d)^{-1}=(ac,bc+d)\left(\frac 1c,-\frac dc\right)=(a,-ad+bc+d),
$$
we have that $C_B=\{(a,b): 1<a<2,\,b\in\mathbb{R}\}$.
Consider the auxiliary function $\Psi:\boldsymbol{ax+b}\to\mathbb{R}^+$ defined by
$$
\Psi(a,b)=\begin{cases} \max\{a-1,|b|\}&\ \text{if}\ 1<a<2,\\
1 &\ \text{otherwise}.
\end{cases}
$$
Obviously, $\Psi$ is a positive measurable function on $\boldsymbol{ax+b}$. We show that it also satisfies
\begin{equation}\label{53}
\frac{\Psi(yz)} {\Psi(zy)}\le \omega_1(y)\omega_1(z)\quad(y,z\in\boldsymbol{ax+b}),
\end{equation}
where $\omega_1(a,b) = (1+a+|b|)$.
Let $y=(a,b),\,z=(c,d)\in \boldsymbol{ax+b}$. Then
$yz=(ac,ad+b)$ and $zy=(ac,bc+d)$. If $0<ac\le1$ or $ac\ge2$, then $\Psi(yz) = \Psi(zy) =1$ and hence (\ref{53}) holds trivially. Now assume $1<ac<2$. Then by the definition of $\Psi$ we have
\[
ac-1\le\Psi(zy)\le\Psi(zy)\omega_1(y)\omega_1(z)\quad\text{and}
\]
\begin{align*}
|ad+b|&=|a(bc+d)-b(ac-1)|\le a|bc+d|+|b|(ac-1)\\
     & \le \max\{ac-1,|bc+d|\}(a+|b|)=\Psi(zy)(a+|b|) \le \Psi(zy)\omega_1(y)\omega_1(z).
\end{align*}
Thus
$$
\Psi(yz)=\max\{ac-1,|ad+b|\}\le\Psi(zy)\omega_1(y)\omega_1(z).
$$
This shows that (\ref{53}) still holds if $1<ac<2$. Therefore, (\ref{53}) holds for all $y,z\in \boldsymbol{ax+b}$.

We now let $\psi=\ln \Psi$. Clearly, $\psi$ is a measurable function supported on $C_B$ and bounded on $B$. We show that it also satisfies the conditions 
\begin{equation}\label{52}
\underset{z\in\,C_B}{\mathrm{ess\,sup}}\,\frac{|\psi(z)|}{\omega_{\alpha}(z)}=\infty \quad\text{and}
\end{equation}
\begin{equation}\label{51}
|\psi(zy)-\psi(yz)|\le C\omega_{\alpha}(y)\omega_{\alpha}(z)\quad (y,z\in\boldsymbol{ax+b})
\end{equation}
for some constant $C>0$. 
 Indeed,
\begin{align*}
\underset{z\in\,C_B}{\mathrm{ess\,sup}}\,\frac{|\psi(z)|}{\omega_{\alpha}(z)}&\ge \sup\limits_{1<a<2}\,\frac{|\psi(a,a-1)|}{\omega_{\alpha}(a,a-1)}= \sup\limits_{1<a<2}\,\frac{|\ln(a-1)|}{(2a)^{\alpha}}=\infty.
\end{align*}
So (\ref{52}) is verified. To show (\ref{51}) we may assume, without loss of generality, that $\Psi(yz)\ge\Psi(zy)$. Then, using (\ref{53}), we obtain
\begin{align*}
\omega_{\alpha}(y)\omega_{\alpha}(z)&=(\omega_1(y)\omega_1(z))^{\alpha}\ge \left(\frac{\Psi(yz)}{\Psi(zy)}\right)^{\alpha}\ge \ln\left(\frac{\Psi(yz)}{\Psi(zy)}\right)^{\alpha}\\&=\alpha \left|\ln\,\frac{\Psi(yz)}{\Psi(zy)}\right|=\alpha|\psi(yz)-\psi(zy)|.
\end{align*}
It follows that $\psi$ satisfies (\ref{51}) with $C=1/\alpha$. Therefore, the function $\psi$ satisfies all the conditions of Theorem~\ref{conjugacygeneral}. This shows that $L^1(\boldsymbol{ax+b},\omega_{\alpha})$ is not weakly amenable. The proof is complete.

\end{proof}

We now equip $\boldsymbol{ax+b}$ with the discrete topology. It is readily seen that $H_{\bb} =\{(1,b):\, b\in\mathbb{R}\}$
is a normal subgroup of $\boldsymbol{ax+b}$, and $(\boldsymbol{ax+b})/H_{\bb}\cong(\mathbb{R}^+,\cdot)$ through the group homomorphism $[(a,b)]\mapsto a$.

\begin{proposition}\label{ax+b}
Let $\omega$ be a weight on $\boldsymbol{ax+b}$ that is bounded away from zero and is bounded on $H_{\bb}$. Then $\ell^1(\boldsymbol{ax+b},\omega)$ is weakly amenable if and only if $\omega$ is diagonally bounded on $\boldsymbol{ax+b}$.
\end{proposition}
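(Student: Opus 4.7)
The forward implication is immediate from general theory: discrete $\boldsymbol{ax+b}$ is solvable and thus amenable, so by Gronbaek's theorem (recalled in the Introduction) the diagonal boundedness of $\omega$ already forces $\ell^1(\boldsymbol{ax+b},\omega)$ to be amenable, hence weakly amenable. I will prove the nontrivial converse by contrapositive: assuming $\omega$ is not diagonally bounded, I construct a bounded non-inner derivation $D:\ell^1(\boldsymbol{ax+b},\omega)\to\ell^{\infty}(\boldsymbol{ax+b},1/\omega)$. The first reduction uses that $\omega$ is bounded on $H_{\bb}$: the factorizations $(a,b)=(1,b)(a,0)=(a,0)(1,b/a)$ and submultiplicativity, together with $M:=\sup_{b\in\bR}\omega(1,b)<\infty$, give $\tilde\omega(a)/M\le\omega(a,b)\le M\tilde\omega(a)$, where $\tilde\omega(a):=\omega(a,0)$. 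In particular $\omega(a,b)\omega((a,b)^{-1})\asymp\tilde\omega(a)\tilde\omega(1/a)$, so the failure of diagonal boundedness translates to $\sup_{a>0}\tilde\omega(a)\tilde\omega(1/a)=\infty$.

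Define $f:\boldsymbol{ax+b}\to\bR$ by $f(1,b):=\ln(\tilde\omega(|b|)\tilde\omega(1/|b|))$ for $b\neq 0$ and $f\equiv 0$ on the rest of $\boldsymbol{ax+b}$, and set $\alpha(x,y):=f(xy)-f(yx)$. Telescoping gives~(\ref{12}) automatically, so by Lemma~\ref{derivationform} $\alpha$ will be the cocycle of a bounded derivation once I verify $\alpha\in\ell^{\infty}(G\times G,1/(\omega\times\omega))$. For $x=(a,b_0)$ and $y=(c,d_0)$ the first coordinates of $xy$ and $yx$ both equal $ac$, so $\alpha(x,y)=0$ unless $ac=1$. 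In the remaining case $c=1/a$, a direct computation gives $xy=(1,s)$, $yx=(1,t)$ with $s=at$ (so $s,t$ share their sign). Submultiplicativity of $\tilde\omega$ then yields $\tilde\omega(s)/\tilde\omega(t)\le\tilde\omega(a)$, $\tilde\omega(t)/\tilde\omega(s)\le\tilde\omega(1/a)$, and the analogous bounds on $\tilde\omega(1/s)/\tilde\omega(1/t)$; combining them produces
\[
|\alpha(x,y)|\le 2\ln\bigl(\tilde\omega(a)\tilde\omega(1/a)\bigr)\le 2\tilde\omega(a)\tilde\omega(1/a)\asymp\omega(x)\omega(y),
\]
where the last step uses $\ln u\le u$ for $u\ge 1$ (note $\tilde\omega(a)\tilde\omega(1/a)\ge\tilde\omega(1)\ge 1$). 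This boundedness verification, in which submultiplicativity lets one trade the two free coordinates $b_0,d_0$ for the single diagonal factor $\tilde\omega(a)\tilde\omega(1/a)$, is the step I expect to be the main technical obstacle.

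It remains to show that the resulting derivation $D$ is not inner. If it were, there would exist $\varphi\in\ell^{\infty}(G,1/\omega)$ with $\alpha(x,y)=\varphi(xy)-\varphi(yx)$, which forces $f-\varphi$ to be a class function on $G$. The set $\{(1,b):b>0\}$ is a single conjugacy class of $\boldsymbol{ax+b}$ (since $(c,d)(1,b)(c,d)^{-1}=(1,cb)$ as $c$ ranges over $\bR^+$), so restricting to this class the class-function part is constant and $\varphi$ is bounded because $\omega$ is bounded on $H_{\bb}$; hence $f$ itself would have to be bounded there. However $f(1,b)=\ln(\tilde\omega(b)\tilde\omega(1/b))$ and $\sup_{b>0}\tilde\omega(b)\tilde\omega(1/b)=\sup_{a>0}\tilde\omega(a)\tilde\omega(1/a)=\infty$, a contradiction. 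Therefore $D$ is not inner and $\ell^1(\boldsymbol{ax+b},\omega)$ is not weakly amenable.
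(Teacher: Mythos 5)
Your proof is correct and follows essentially the same route as the paper: the paper likewise transplants the failure of diagonal boundedness onto the conjugacy class $\{(1,b):b>0\}$ via $\psi(1,b)=\ln\bigl(\hat\omega(b)\hat\omega(b^{-1})\bigr)$ (with $\hat\omega(a)=\inf_{h\in H_{\bb}}\omega((a,0)h)$ in place of your $\omega(a,0)$, equivalent since $\omega$ is bounded on $H_{\bb}$), proves the same two estimates by submultiplicativity and $\ln u\le u$, and then invokes Corollary~\ref{conjugacy} rather than rerunning the non-innerness argument by hand as you do with your class-function observation. The only other difference is cosmetic: the paper cites \cite[Proposition~4.1]{Shepelska} for sufficiency where you argue directly from Gronbaek's amenability theorem; both are valid.
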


\begin{proof}
The sufficiency is due to \cite[Proposition~4.1]{Shepelska}.

For the necessity, we assume that $\omega$ is not diagonally bounded. Let $\hat{\omega}$ be the function on $\boldsymbol{ax+b}$ defined by  $\hat{\omega}(z)=\inf\limits_{h\in H_{\bb}} \omega(zh)$.  Clearly, $\hat{\omega}$ is submultiplicative on $\boldsymbol{ax+b}$ and $\hat\omega(a,b)$ is independent of $b$. We simply denote $\hat\omega(a,b)$ by $\hat\omega(a)$. Then $\hat \omega$ is a submultiplicative function on $\bR^+$. It is easy to verify further that
\begin{equation}\label{omega hat}
\hat{\omega}(a) \leq \omega(a,b) \leq \tilde{c}\,\hat\omega(a) \quad ((a,b)\in \boldsymbol{ax+b}),
\end{equation}
where $\tilde{c}=\sup\limits_{h\in H_{\bb}} \omega(h)$. By our assumption $0<\tilde c < \infty$.

Consider the singleton set $B = \{(1,1)\}$. The conjugacy class of $B$ is
\[
C_B = \{y\cdot(1,1)\cdot y^{-1}: \, y\in \boldsymbol{ax+b}\} = \{(1,b): \, b>0\}.
\]

Define $\psi:\boldsymbol{ax+b}\to\mathbb{R}$ by
$$
\psi(a,b)=
\begin{cases}
\ln\left(\hat{\omega}(b)\hat{\omega}(b^{-1})\right) &\ \ \text{if}\ \ a=1,\,b>0,\\
0 &\ \ \text{otherwise}.
\end{cases}
$$
By definition, $\psi$ vanishes outside the conjugacy class $C_B$.
We show that
\begin{equation}\label{inequality 1}
|\psi(zy)-\psi(yz)|\le \omega(y)\omega(z)\quad (y,z\in\boldsymbol{ax+b}).
\end{equation}
Note that $zy$ and $yz$ always belong to the same conjugacy class for $y,z\in \boldsymbol{ax+b}$. So it suffices to verify (\ref{inequality 1}) for $zy,yz\in C_B$. Let $yz=(1,b)$, and $z=(k,l)$, $b,k>0$, $l\in\mathbb{R}$. Then
$$
y=(yz)z^{-1}=(k^{-1},(-l+bk)k^{-1}),\quad
zy=\left(1,bk\right).
$$
It follows that
\[
|\psi(zy)-\psi(yz)|=|\psi(1,bk)-\psi(1,b)|=\left|\ln\frac{\hat{\omega}(bk)\hat{\omega}((bk)^{-1})}{\hat{\omega}(b) \hat{\omega}((b)^{-1})}\right| \le \bigl|\ln\bigl(\hat{\omega}(k) \hat{\omega}(k^{-1})\bigr)\bigr|
\]
since
\[
 \frac{1}{\hat{\omega}(k)\hat{\omega}(k^{-1})}\le \frac{\hat{\omega}(bk)\hat{\omega}((bk)^{-1})}{\hat{\omega}(b)\hat{\omega}(b^{-1})}\le \hat{\omega}(k)\hat{\omega}(k^{-1}).
\]
But $\hat{\omega}(k)\hat{\omega}(k^{-1})\ge\hat{\omega}(e)\ge 1$. So $\bigl|\ln\bigl(\hat{\omega}(k) \hat{\omega}(k^{-1})\bigr)\bigr|=\ln\bigl(\hat{\omega}(k) \hat{\omega}(k^{-1})\bigr)\le \hat{\omega}(k) \hat{\omega}(k^{-1})$, which implies
$$
|\psi(zy)-\psi(yz)| \le \hat{\omega}(k)\hat{\omega}(k^{-1}).
$$
On the other hand, relation~(\ref{omega hat}) yields
\[
\omega(y) \geq \hat\omega(k^{-1}), \quad \omega(z) \geq \hat{\omega}(k).
\]
Thus we obtain (\ref{inequality 1}) as desired. Moreover, using (\ref{omega hat}) again, we have
\begin{align*}
\sup\limits_{x\in C_B}\,\frac{|\psi(x)|}{\omega(x)}&=\sup\limits_{b>0}\,\frac{|\psi(1,b)|}{\omega(1,b)}= \sup\limits_{b>0}\,\frac{|\ln(\hat{\omega}(b)\hat{\omega}(b^{-1}))|}{\omega(1,b)}\\
&\ge\frac{\sup\limits_{z\in\boldsymbol{ax+b}}|\ln(\omega(z)\omega(z^{-1}))|-|\ln\tilde{c}^2|}{\tilde c}=\infty,
\end{align*}
since $\omega$ is not diagonally bounded on $G$. From Corollary~\ref{conjugacy}, $\ell^1(\boldsymbol{ax+b},\omega)$ is not weakly amenable. The proof is complete.

\end{proof}

\section{Beurling algebra of quotient groups}\label{c5}

Let $G$ be a locally compact group, $\omega$ be a weight on $G$, and $H$ be a closed normal subgroup of $G$. Define $\hat\omega$ on the quotient group $G/H$ by
\[
\hat{\omega}([x])=\inf_{z\in[x]}\omega(z) = \inf_{\xi\in H}\omega(x\xi),
\]
where $[x]$ stands for the coset of $x$ in $G/H$ ($x\in G$). From \cite[Theorem~11.0]{HR} we know that $\hat\omega$ is a nonnegative upper semicontinuous and hence is a locally bounded measurable function on $G/H$. To avoid $\hat\omega$ being trivial, here and in the rest of this section we assume that $\omega$ is bounded away from zero. Then $\hat\omega$ is a locally bounded measurable weight function on $G/H$ \cite[Theorem~3.7.13]{RS}. As indicated in Section~\ref{Intro}, $\hat\omega$ is equivalent to a continuous weight. We note that in studying the weighted group algebra $\L1o$, requiring $\omega$ to be bounded away from zero is not really a restriction if $G$ is an amenable group. Indeed, if $G$ is amenable, then by \cite[Lemma~1]{White}\label{white} there exists a continuous positive character $\phi:G\to(\mathbb{R}^+,\cdot)$ such that $\phi\le\omega$ on $G$. Then $\tilde{\omega}=\omega/\phi\ge 1$ is a weight on $G$ and $L^1(G,\omega)$ is isometrically isomorphic to $L^1(G,\tilde{\omega})$ as a Banach algebra.

We are concerned with the relation between weak amenability of $\L1o$ and that of $L^1(G/H,\hat{\omega})$. First, as a simple consequence of Theorems~\ref{IN} and \ref{Thm_zhang} we obtain the following.

\begin{proposition}\label{quotient_IN}
Let $G$ be an IN group and H be a closed normal subgroup of $G$ such that $G/H$ is Abelian. Suppose that $\omega$ is a weight on~$G$ that is bounded away from zero. If $L^1(G,\omega)$ is weakly amenable, then so is $L^1(G/H,\hat{\omega})$.
\end{proposition}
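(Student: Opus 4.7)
The plan is to argue by contrapositive, combining Theorem~\ref{Thm_zhang} (to exploit the Abelian structure of $G/H$) with Theorem~\ref{IN} (to exploit the IN structure of $G$). Assume for contradiction that $L^1(G/H, \hat\omega)$ is not weakly amenable. Since $G/H$ is Abelian and $\hat\omega$ is a weight on $G/H$ (bounded away from zero because $\omega$ is), Theorem~\ref{Thm_zhang} supplies a non-zero continuous group homomorphism $\Phi: G/H \to \bC$ such that
\[
M := \sup_{[x]\in G/H}\frac{|\Phi([x])|}{\hat\omega([x])\,\hat\omega([x]^{-1})} < \infty.
\]

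Next I would lift $\Phi$ to $G$. Let $\pi: G \to G/H$ be the quotient map, which is a continuous surjective group homomorphism (here normality of $H$ is used, and also to identify $[x]^{-1} = [x^{-1}]$). Set $\tilde\Phi := \Phi \circ \pi : G \to \bC$. Then $\tilde\Phi$ is a continuous group homomorphism, and it is non-zero because $\Phi$ is non-zero and $\pi$ is surjective. From the definition $\hat\omega([y]) = \inf_{\xi \in H} \omega(y\xi)$, taking $\xi = e$ gives $\hat\omega([y]) \leq \omega(y)$ for every $y\in G$. Applying this to $y = x$ and $y = x^{-1}$ yields
\[
\hat\omega([x])\,\hat\omega([x]^{-1}) \leq \omega(x)\,\omega(x^{-1}) \quad (x\in G),
\]
so that
\[
\sup_{x\in G}\frac{|\tilde\Phi(x)|}{\omega(x)\,\omega(x^{-1})} = \sup_{x\in G}\frac{|\Phi([x])|}{\omega(x)\,\omega(x^{-1})} \leq M < \infty.
\]

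Finally, since $G$ is an IN group and $\tilde\Phi$ is a non-trivial continuous homomorphism $G \to \bC$ satisfying the boundedness hypothesis of Theorem~\ref{IN}, that theorem forces $L^1(G,\omega)$ to be non-weakly amenable, contradicting the assumption. Hence $L^1(G/H,\hat\omega)$ must be weakly amenable.

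There is no real obstacle here: the argument is essentially a one-line reduction using the existing theorems, with only the elementary verifications that (i) $\tilde\Phi$ inherits continuity and non-triviality from $\Phi$, and (ii) $\hat\omega \leq \omega$ on corresponding elements. The mild subtlety worth checking is that the IN property of $G$, together with $\omega$ being bounded away from zero, makes both Theorem~\ref{IN} and the weight-theoretic facts about $\hat\omega$ (in particular that it is a genuine weight on $G/H$, as noted in the preamble of the section) available.
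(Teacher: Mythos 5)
Your proof is correct and follows essentially the same route as the paper: apply Theorem~\ref{Thm_zhang} to the Abelian quotient to produce $\Phi$, lift it to $\tilde\Phi = \Phi\circ\pi$ on $G$, use $\hat\omega([x])\le\omega(x)$ to transfer the boundedness condition, and conclude via Theorem~\ref{IN}. No differences worth noting.
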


\begin{proof}
If $L^1(G/H,\hat{\omega})$ were not weakly amenable, according to Theorem~\ref{Thm_zhang}, there would exist a continuous non-trivial group homomorphism $\Phi: G/H\to\mathbb{C}$ such that
$$
\sup_{[x]\in G/H}\,\frac{|\Phi([x])|}{\hat{\omega}([x])\hat{\omega}([x]^{-1})}<\infty.
$$
Then the natural extension $\tilde{\Phi}$ of $\Phi$ to $G$ defined by $\tilde{\Phi}(x)=\Phi([x])$ ($x\in G$) is a non-trivial continuous group homomorphism from $G$ to $\mathbb{C}$ and
$$
\sup_{x\in G}\,\frac{|\tilde{\Phi}(x)|}{\omega(x)\omega(x^{-1})}\le \sup\limits_{[x]\in G/H}\,\frac{|\Phi([x])|}{\hat{\omega}([x])\hat{\omega}([x]^{-1})}<\infty,
$$
since $\hat{\omega}([x])\le\omega(x)$ ($x\in G$). By Theorem~\ref{IN} this implies that $L^1(G,\omega)$ is not weakly amenable, contradicting our assumption.

\end{proof}

For the general case, according to the theory established in~\cite{RS}, there is a standard Banach algebra homomorphism $T$ from  $L^1(G,\omega)$ onto $L^1(G/H,\hat{\omega})$ defined by
\begin{equation}\label{operator T}
(Tf)([x])=\int\limits_{H}f(xh)\,dh \quad (f\in L^1(G,\omega),\,x\in G).
\end{equation}
The kernel of $T$ is a closed ideal in $\L1o$ and we denote it by $J_{\omega}(G,H)$. It was proved in \cite[Theorem~3.7.13]{RS} that $T$ induces an isometric isomorphism between $L^1(G,\omega)/J_{\omega}(G,H)$ and $L^1(G/H,\hat{\omega})$. So we are in the situation concerned by the following well-known result.

\begin{proposition}\label{WA_ideal}\cite[Proposition~2.4]{Gronbaek92}
Let $A$ be a weakly amenable Banach algebra and $I$ be a closed ideal in $A$. Then $A/I$ is weakly amenable if and only if $I$ has the trace extension property as described in the following.

For every $\lambda\in I^*$ satisfying $a\cdot\lambda=\lambda\cdot a$ ($a\in A$), there is a $\tau\in A^*$ such that $\tau|_I=\lambda$ and $\tau(ab)=\tau(ba)$ ($a,b\in A$).
\end{proposition}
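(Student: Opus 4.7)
The plan is to work through the identification $(A/I)^* \cong I^{\perp} := \{f \in A^*: f|_I = 0\}$, a sub-$A$-bimodule of $A^*$ on which the left and right $A$-actions factor through the quotient map $q: A \to A/I$. Under this identification, continuous derivations $D: A/I \to (A/I)^*$ correspond bijectively to continuous derivations $\tilde{D}: A \to I^{\perp} \subseteq A^*$ satisfying $\tilde{D}|_I = 0$, via $\tilde{D} = D \circ q$. A key preliminary observation, to be used in both directions, is that the centrality condition $a \cdot \lambda = \lambda \cdot a$ in $I^*$ unpacks to $\lambda(ax) = \lambda(xa)$ for all $a \in A$ and $x \in I$.

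For the sufficiency direction, given a continuous derivation $D: A/I \to (A/I)^*$, I lift it to $\tilde{D} = D \circ q: A \to A^*$. By weak amenability of $A$ there is $f \in A^*$ with $\tilde{D}(a) = a \cdot f - f \cdot a$. Since $\tilde{D}$ vanishes on $I$, a short computation shows that $\lambda := f|_I$ satisfies $a \cdot \lambda = \lambda \cdot a$ in $I^*$. The trace extension property then yields $\tau \in A^*$ with $\tau|_I = \lambda$ and $\tau(ab) = \tau(ba)$; the latter identity is equivalent to $a \cdot \tau = \tau \cdot a$ in $A^*$, so setting $g := f - \tau$ gives an element of $I^{\perp}$ that still implements $\tilde{D}$, and under the identification $I^{\perp} \cong (A/I)^*$ this exhibits $D$ as inner.

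For the necessity direction, start with $\lambda \in I^*$ satisfying $a \cdot \lambda = \lambda \cdot a$. Extend $\lambda$ by Hahn-Banach to some $\tilde{\lambda} \in A^*$ and form the inner derivation $\tilde{D}(a) := a \cdot \tilde{\lambda} - \tilde{\lambda} \cdot a$ from $A$ into $A^*$. The identity $\lambda(ax) = \lambda(xa)$ is precisely what is needed to show simultaneously that $\tilde{D}(x) = 0$ for $x \in I$ and that $\tilde{D}(a) \in I^{\perp}$ for every $a \in A$. Hence $\tilde{D}$ descends to a continuous derivation $\bar{D}: A/I \to I^{\perp} \cong (A/I)^*$. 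Weak amenability of $A/I$ then produces $\mu \in I^{\perp}$ implementing $\bar{D}$, and $\tau := \tilde{\lambda} - \mu$ lies in $A^*$ with $a \cdot \tau = \tau \cdot a$ (i.e.\ $\tau$ is a trace on $A$) and $\tau|_I = \lambda - 0 = \lambda$, which is the required extension.

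The whole argument is essentially bookkeeping around the identification $(A/I)^* \cong I^{\perp}$ and the two equivalences ``centrality in $I^*$ $\leftrightarrow$ $\lambda(ax) = \lambda(xa)$'' and ``trace on $A$ $\leftrightarrow$ $A$-centrality in $A^*$.'' The only step requiring genuine care is verifying in each direction that the corrected element ($f - \tau$ in one case, $\tilde{\lambda} - \mu$ in the other) truly lands in the desired subspace of $A^*$; once the bimodule identifications are set up cleanly, this falls out immediately, and no ingredients beyond Hahn-Banach and the definitions of weak amenability and of the trace extension property are needed.
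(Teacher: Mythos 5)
Your proof is correct. Note that the paper itself gives no proof of this proposition --- it is quoted from Gr\o nbaek's 1992 paper --- and your argument via the identification $(A/I)^*\cong I^{\perp}$, lifting derivations along the quotient map in one direction and pushing the inner derivation $a\mapsto a\cdot\tilde\lambda-\tilde\lambda\cdot a$ down to $A/I$ in the other, is exactly the standard argument behind the cited result; both directions check out, including the key verifications that $f-\tau$ and $\tilde\lambda-\mu$ land in $I^{\perp}$ and that centrality of $\lambda$ in $I^*$ is what makes $\tilde D$ vanish on $I$ and take values in $I^{\perp}$.
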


We now investigate when $J_{\omega}(G,H)$ has the trace extension property as a closed ideal of $L^1(G,\omega)$. We start from proving that $J_{\omega}(G,H)$ is always complemented in $L^1(G,\omega)$ as a Banach subspace. For this we need two technical lemmas.

\begin{lemma}\cite[Proposition~8.1.16]{RS}\label{cover}
Let $H$ be a closed subgroup of a locally compact group $G$ and $U$ be a non-empty open set in $G$ with compact closure. Then there is a subset $Y$ of $G$ such that the family $\{UyH\}_{y\in Y}$ covers $G$ and is locally finite, i.e., every point of $G$ has a neighborhood intersecting at most finitely many members of the family.
\end{lemma}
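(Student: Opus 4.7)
The plan is to construct $Y$ by a maximal disjoint-packing argument preceded by a short translation trick. First, I would reduce to the case $e\in U$: fixing any $u_0\in U$, the set $V:=Uu_0^{-1}$ is open with compact closure and contains $e$, and if $\{Vy'H\}_{y'\in Y'}$ is a locally finite cover of $G$, then setting $y=u_0^{-1}y'$ gives the locally finite cover $\{UyH\}_{y\in Y}$ with $Y=u_0^{-1}Y'$, since $Vu_0=U$ implies $UyH=Vu_0yH=Vy'H$. Assuming now $e\in U$, I would pick a symmetric open neighborhood $N$ of $e$ with compact closure such that $NN\subset U$, which is possible by continuity of multiplication at the identity.

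Next, applying Zorn's lemma to the collection of subsets $Z\subset G$ that contain at most one representative from each left coset $yH$ and satisfy $NyH\cap Ny'H=\emptyset$ for distinct $y,y'\in Z$, I would extract a maximal element $Y$. The covering $G=\bigcup_{y\in Y}UyH$ then follows from maximality: if $g\in G$ already represents a coset $[g]=[y']$ with $y'\in Y$, then $g\in Ny'H\subset Uy'H$ using $N\subset NN\subset U$ and $e\in N$; otherwise, $Y\cup\{g\}$ must violate the disjointness condition, so there exists $y\in Y$ with $NgH\cap NyH\ne\emptyset$, and unwinding this intersection with $N$ symmetric yields $g\in N^{-1}NyH=NNyH\subset UyH$.

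For local finiteness, I would project to $G/H$ via $\pi$: the sets $\pi(NyH)=N\cdot[y]$ are pairwise disjoint open neighborhoods of the distinct points $[y]\in G/H$. Given $g\in G$ and a compact neighborhood $K$ of $g$, any $y\in Y$ with $UyH\cap K\ne\emptyset$ forces $[y]\in\pi(U^{-1}K)$, a set with compact closure $C$ in $G/H$. Fix a $G$-quasi-invariant Radon measure $\nu$ on $G/H$. Since the quasi-invariance Radon--Nikod\'ym density is bounded below on compact subsets of $G/H$, each $\pi(NyH)$ with $[y]\in C$ has $\nu$-measure at least a positive constant $c$ depending only on $N$ and $C$. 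The pairwise disjoint sets $\pi(NyH)$ all lie inside $N\cdot C$, which is relatively compact and hence of finite $\nu$-measure, so only finitely many $y\in Y$ can intersect $K$.

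The principal obstacle is this last step: pairwise disjointness of open neighborhoods of points in a compact Hausdorff space does not by itself force the set of points to be finite (a compact set can contain infinitely many pairwise disjoint open sets), so one must invoke an auxiliary quasi-invariant measure on $G/H$ together with local boundedness of its density to extract a uniform lower bound on the measures of the sets $\pi(NyH)$.
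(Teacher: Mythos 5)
The paper does not prove this lemma at all --- it is quoted verbatim from Reiter--Stegeman \cite[Proposition~8.1.16]{RS} --- so there is no in-paper argument to compare against; I am judging your proof on its own. Your architecture (translation to $e\in U$, Zorn's-lemma maximal packing with $NN\subset U$, maximality giving the covering, and a measure-theoretic pigeonhole for local finiteness) is sound, and the covering half is complete and correct. You have also correctly located the crux: disjointness of the open sets $\pi(NyH)$ alone does not give finiteness, and one needs a uniform positive lower bound on their measures.

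The gap is in the justification of that lower bound. The sets $\pi(NyH)=N\cdot[y]$ are \emph{not} $G$-translates of a fixed subset of $G/H$ (one has $N\cdot[y]=y\cdot\bigl((y^{-1}Ny)\cdot[e]\bigr)$, and the conjugate $y^{-1}Ny$ varies with $y$), so ``the Radon--Nikod\'ym density is bounded below on compacts'' does not, by itself, yield $\nu(N\cdot[y])\geq c$. To repair it you must first replace each $y$ by a representative lying in the compact set $\overline{U^{-1}K}$ (legitimate, since $N\cdot[y]$ depends only on $[y]$), then use a uniform conjugation estimate to find a fixed neighbourhood $N_1$ of $e$ with $yN_1y^{-1}\subset N$ for all such $y$, so that $N\cdot[y]\supset yN_1\cdot[e]$, and only then invoke positivity and continuity of the cocycle. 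Alternatively --- and more cleanly --- note that if $N_0$ is symmetric with $N_0N_0\subset N$ and $\xi'\in N_0\xi$, then $N\xi'\supset N_0\xi$; covering the compact set $C$ by finitely many $N_0\xi_i$ gives $\nu(N\xi)\geq\min_i\nu(N_0\xi_i)>0$ for all $\xi\in C$, using only that a quasi-invariant measure charges nonempty open sets. A separate caveat: in the standard development (including \cite{RS}) the existence of quasi-invariant measures on $G/H$ is itself derived from a Bruhat function, which is built from precisely this covering lemma, so your appeal to $\nu$ is circular in that framework. Both problems disappear if you run the same pigeonhole with the left Haar measure of $G$ itself: with the packing done for $N_0NyH$, each set $NyH$ with $UyH\cap K\neq\emptyset$ meets the relatively compact set $N\overline{U}^{-1}K$ at some point $p$, whence $N_0p\subset N_0NyH\cap N_0N\overline{U}^{-1}K$; the left translates $N_0p$ have constant positive Haar measure and sit inside pairwise disjoint sets within a set of finite measure, forcing finiteness.
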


The second lemma we need generalizes the investigation in \cite[Section~8.1]{RS} of the Bruhat function associated to a normal subgroup.

\begin{lemma}\label{bruhat}
Let $G$ be a locally compact group, $H$ be a closed normal subgroup of~$G$, and $\omega$ be a weight on $G$ bounded away from zero. Then there exists a continuous function $g\ge0$ on $G$ and a constant $c>0$ such that the following two conditions are satisfied:
\begin{equation}\label{gbruhat1}
\int\limits_{H} g(xh)\,dh=1\quad (x\in G)\quad\text{and}
\end{equation}
\begin{equation}\label{gbruhat2}
\int\limits_{H}g(xh)\omega(xh)\,dh\le c\,\hat{\omega}([x])\quad (x\in G).
\end{equation}
\end{lemma}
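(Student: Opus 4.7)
The plan is to adapt the classical Bruhat function construction, which yields a continuous $g\ge 0$ satisfying only (\ref{gbruhat1}) (see \cite[Section~8.1]{RS}), by carefully choosing coset representatives so as to incorporate information about $\hat\omega$.

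First, apply Lemma~\ref{cover} with a sufficiently small open, relatively compact neighborhood $V$ of $e$ to produce $Y\subset G$ such that $\{VyH\}_{y\in Y}$ is a locally finite cover of $G$; without loss of generality, the cosets $\{yH\}_{y\in Y}$ are pairwise distinct. Since $VyH$ depends only on the coset $yH$, for each $y\in Y$ we may replace $y$ by an element $y^{*}\in yH$ satisfying $\omega(y^{*})\le 2\hat\omega([y])$; such $y^{*}$ exists by the definition of the infimum. This coset-representative adjustment is the crucial modification over the classical construction. Choose $\phi\in C_c(G)$ with $\phi\ge 0$, $\phi\equiv 1$ on $\overline{V}$, and set $K=\mathrm{supp}\,\phi$. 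Define
$$\psi(x)=\sum_{y\in Y}\phi(x(y^{*})^{-1}),\qquad \Psi(x)=\int_H \psi(xh)\,dh,\qquad g(x)=\psi(x)/\Psi(x).$$

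Continuity of $\psi$ and $\Psi$ follows from local finiteness of the sums: for any compact $K'\subset G$, local finiteness of $\{VyH\}$ forces $\{y\in Y: K'\cap VyH\ne\emptyset\}$ to be finite, and since $y^{*}\in yH\subset VyH$ (using $e\in V$), at most finitely many $y^{*}$ lie in $K'$. Left-invariance of the Haar measure on $H$ gives $\Psi(xh_0)=\Psi(x)$ for every $h_0\in H$, so $\Psi$ descends to a function on $G/H$. Positivity follows from the covering property: writing $x=vy^{*}h$ with $v\in V$, $h\in H$, one has $\phi(xh^{-1}(y^{*})^{-1})=\phi(v)=1$, and continuity extends this positivity to a neighborhood of $h^{-1}$ in $H$, so $\Psi(x)>0$. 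Hence $g$ is continuous and nonnegative, and a direct computation exploiting $H$-invariance of $\Psi$ yields (\ref{gbruhat1}).

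For (\ref{gbruhat2}), the key pointwise estimate is $\omega(xh)\le c\,\hat\omega([x])$ uniformly for $h$ in the support of the integrand. If $\phi(xh(y^{*})^{-1})\ne 0$, then $xh=ky^{*}$ for some $k\in K$, so
$$\omega(xh)\le \omega(k)\omega(y^{*})\le 2M\hat\omega([y]),\qquad M:=\sup_{k\in K}\omega(k)<\infty.$$
Since $[x]=[k][y]$ in the quotient group $G/H$ and $\hat\omega$ is submultiplicative,
$$\hat\omega([y])\le \hat\omega([k^{-1}])\,\hat\omega([x])\le M'\hat\omega([x]),\qquad M':=\sup_{k\in K}\omega(k^{-1})<\infty,$$
giving $\omega(xh)\le 2MM'\hat\omega([x])$. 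Integrating then yields (\ref{gbruhat2}) with $c=2MM'$. The main obstacle is the bookkeeping around local finiteness after the representative replacement, since each $y^{*}$ can lie anywhere in the non-compact coset $yH$ and the translated supports $Ky^{*}$ shift accordingly; this is resolved by the distinct-coset property together with $e\in V$, which forces the new family of supports to inherit the local finiteness of the underlying cover $\{VyH\}$.
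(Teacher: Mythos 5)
Your proposal is correct and follows essentially the same route as the paper's proof: a locally finite cover $\{VyH\}_{y\in Y}$ from Lemma~\ref{cover}, coset representatives $y^{*}$ chosen so that $\omega(y^{*})\le 2\hat\omega([y])$, a locally finite sum of translated bump functions, and normalization by the $H$-integral. The only cosmetic difference is that you prove the pointwise bound $\omega(xh)\le c\,\hat\omega([x])$ on the support of the integrand directly (via submultiplicativity of $\hat\omega$ and $\hat\omega([k^{-1}])\le\omega(k^{-1})$), whereas the paper phrases the same estimate as a support inclusion for the unnormalized function before dividing by $\int_H g_1(xh)\,dh$.
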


\begin{proof}
We first construct a continuous function $g_1$ on $G$ that satisfies
\begin{equation}\label{g1}
0<\int\limits_{H} g_1(xh)\,dh<\infty\quad (x\in G)\quad\text{and}
\end{equation}
\begin{equation}\label{g1_supp}
\text{supp}\,g_1\subset\{x\in G: \omega(x)\le c\,\hat{\omega}([x])\},
\end{equation}
where $c>0$ is a constant.

Consider a non-trivial non-negative function $f\in C_{00}(G)$. Let
\[
U=\{x\in G: f(x)>0\}.
\]
 Then $U\ne \emptyset$ is an open set with a compact closure. Let $\tilde c>0$ be a constant such that $\omega(u),\omega(u^{-1})\le\tilde{c}$ for every $u\in U$. (The existence of such $\tilde c$ is justified by the compactness of $\overline U$ and the continuity of $\omega$.) We set $c = 2\tilde c^2$.

By Lemma~\ref{cover}, there exists a set $Y\subset G$ such that the family $\{UyH\}_{y\in Y}$ covers~$G$ and is locally finite. For every $y\in Y$, by the definition of $\hat{\omega}$, there is $y_0\in[y]$ such that $\omega(y_0)\le2\hat{\omega}([y])$. We define $g_{1,y}(x)=f(xy_0^{-1})$ ($x\in G$). Clearly, $g_{1,y}\ge 0$ is a continuous function with compact support, and
\[
\{x: g_{1,y}(x) \ne 0\} = \{x: f(x) >0\}\cdot y_0 = Uy_0 \subset UyH.
\]
We now show that $g_{1,y}$ satisfies (\ref{g1_supp}), which is equivalent to
\begin{equation}\label{g1y_supp}
Uy_0\subset\{x\in G: \omega(x)\le c\,\hat{\omega}([x])\}.
\end{equation}
In fact, for each $u\in U$, by the choice of $y_0$ we have
\begin{align*}
\omega(uy_0)&\le\omega(u)\omega(y_0)\le 2\tilde{c}\,\hat{\omega}([y]) = 2\tilde{c}\inf_{h\in H}\omega(y_0h)\\
            &\leq 2\tilde{c}\omega(u^{-1})\inf_{h\in H}\omega(uy_0h) \leq 2\tilde{c}^2\hat{\omega}([uy_0]).
\end{align*}
So (\ref{g1y_supp}) holds.
Next we prove that $g_{1,y}$ satisfies
\begin{equation}\label{66}
0<\int\limits_{H} g_{1,y}(xh)\,dh<\infty\quad (x\in UyH).
\end{equation}
By definition, $g_{1,y}$ is a non-negative continuous function with a compact support. So the upper inequality holds.
Since $H$ is a normal subgroup of $G$, when $x\in UyH$ we have $xy_0^{-1} \in UH$, and hence there is $h_0\in H$ such that $xy_0^{-1}h_0 \in U$. Because $U$ is open, there is a non-trivial open subset $V$ of $H$ such that $xy_0^{-1}V \subset U$. Let $V_0 = y_0^{-1}Vy_0$. Then $V_0\ne \emptyset$ is an open subset of $H$ such that $xV_0y_0^{-1}\subset U$. Since $f>0$ on $U$, $g_{1,y}>0$ on $xV_0$. Therefore,
\[
\int\limits_{H} g_{1,y}(xh)\,dh \ge \int\limits_{V_0} g_{1,y}(xh)\,dh > 0.
\]

Now we let
$$
g_1=\sum\limits_{y\in Y} g_{1,y}
$$
Note that since $\{x: g_{1,y}(x)\ne0\}\subset UyH$ ($y\in Y$) and the family $\{UyH\}_{y\in Y}$ is locally finite, the sum in the definition of $g_1$ has only finitely many non-zero terms in a neighborhood of every point. This implies that $g_1$ is well-defined, and because each $g_{1,y}$ is continuous, $g_1$ is also continuous on $G$.
From (\ref{66}) and the local finiteness of $\{UyH\}_{y\in Y}$ it follows that (\ref{g1}) holds.
The inclusion (\ref{g1_supp}) also holds since it holds for each $g_{1,y}$. So the function $g_1$ satisfies all our requirements.

We then define the function $g$ by
$$
g(x)=\frac{g_1(x)}{\int\limits_H g_1(xh)\,dh}\quad (x\in G).
$$
Clearly, $g$ is a continuous non-negative function on $G$ and it satisfies
\[
\int\limits_H g(xh)\,dh=\int\limits_{H\ } \frac{g_1(xh)}{\int_H g_1(xht)\,dt}\ dh=\frac{\int_H g_1(xh)\,dh}{\int_H g_1(xt)\,dt}=1\quad (x\in G).
\]
So (\ref{gbruhat1}) is satisfied.
Moreover, it follows directly from (\ref{g1_supp}) and (\ref{gbruhat1}) that
$$
\int\limits_H g(xh)\omega(xh)\,dh\le c\,\hat{\omega}([x])\int\limits_H g(xh)\,dh=c\,\hat{\omega}([x]).
$$
So (\ref{gbruhat2}) is also satisfied.
The proof is complete.

\end{proof}

Let $g$ be a function ensured in Lemma~\ref{bruhat} and $T$ be the homomorphism given by (\ref{operator T}). Define
\beq\label{proj}
(Pf)(x)=(Tf)([x])\, g(x)\quad (x\in G,\ f\in L^1(G,\omega)).
\eeq
Then for each $f\in \L1o$, the function $P(f)$ is clearly measurable. By Weil's Formula and inequality~(\ref{gbruhat2}) we have
\begin{align*}
\int\limits_G |(Pf)(x)|\,\omega(x)\,dx &=\int\limits_{G/H}\int\limits_H |(Tf)([x])|g(xh)\omega(xh)\,dh\,d[x]\\
   &=\int\limits_{G/H}|(Tf)([x])|\int\limits_H g(xh)\omega(xh)\,dh\,d[x] \\
     &\le \int\limits_{G/H} |(Tf)([x])|\cdot c\,\hat{\omega}([x])\,d[x]=c\,\|Tf\|_{1,\hat{\omega}}\le c\,\|f\|_{1,\omega}.
\end{align*}
So $P: L^1(G,\omega)\to L^1(G,\omega)$ is a bounded operator with $\|P\|\le c$.

\begin{theorem}\label{complemented}
Let $G$ be a locally compact group, $H$ be a closed normal subgroup of~$G$, and $\omega$ be a weight on $G$ bounded away from zero. Then the mapping $P:L^1(G,\omega)\to L^1(G,\omega)$ defined by (\ref{proj}) is a continuous projection whose kernel is $J_{\omega}(G,H)$.
\end{theorem}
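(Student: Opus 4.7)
The boundedness of $P$ has already been established in the paragraph preceding the theorem, so the task reduces to verifying the two algebraic identities $P^2 = P$ and $\ker P = J_{\omega}(G,H)$. The single computation that drives everything is $T\circ P = T$; I will derive both claims from it.

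First, I would compute $T(Pf)$. For $f \in L^1(G,\omega)$ and $x \in G$,
\[
(T(Pf))([x]) = \int_{H} (Pf)(xh)\,dh = \int_{H} (Tf)([xh])\, g(xh)\,dh.
\]
Since $H$ is a subgroup, $[xh] = [x]$, so $(Tf)([xh]) = (Tf)([x])$ can be pulled out of the integral. Using the normalization $\int_{H} g(xh)\,dh = 1$ from (\ref{gbruhat1}), I obtain $(T(Pf))([x]) = (Tf)([x])$, i.e., $T(Pf) = Tf$. A small point of care: the application of Fubini/Weil-type rearrangement is legitimate because $(Tf)\in L^{1}(G/H,\hat\omega)$ and the integrand is dominated by $|(Tf)([x])|\,g(xh)\omega(xh)\hat\omega([x])^{-1}$ up to a constant, as in the boundedness estimate already carried out.

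Granted the identity $T\circ P = T$, the idempotence is immediate: for any $f$,
\[
(P(Pf))(x) = (T(Pf))([x])\,g(x) = (Tf)([x])\,g(x) = (Pf)(x),
\]
so $P^{2} = P$. For the kernel, one inclusion is trivial from the definition of $P$: if $f \in J_{\omega}(G,H) = \ker T$, then $(Pf)(x) = 0 \cdot g(x) = 0$. Conversely, if $Pf = 0$, then $0 = T(Pf) = Tf$, so $f \in \ker T = J_{\omega}(G,H)$.

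I do not foresee a serious obstacle; the only delicate point is ensuring that the interchange of integration in the identity $T(Pf)=Tf$ is justified, which is guaranteed by the same Weil-formula bound used to prove $\|P\|\le c$. Everything else is a direct algebraic consequence of $\int_H g(xh)\,dh = 1$ and the $H$-invariance of the coset map $x\mapsto[x]$.
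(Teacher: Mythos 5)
Your proposal is correct and follows essentially the same route as the paper: the paper verifies $P^2=P$ by exactly the computation you package as $T\circ P=T$ (pulling $(Tf)([xh])=(Tf)([x])$ out of the integral and using $\int_H g(xh)\,dh=1$), and it disposes of the kernel claim by noting $\ker P=\ker T$, which your two-line argument merely spells out. Isolating the identity $T\circ P=T$ is a slightly cleaner organization but not a different proof.
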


\begin{proof}
 Obviously, $\mathrm{ker}(P) = \mathrm{ker}(T) = J_{\omega}(G,H)$. So we only need to verify that $P^2 = P$. In fact,
\begin{align*}
(P^2f)(x)&=(P(Pf))(x)=(T(Pf))([x])\,g(x)=\left(\int\limits_H (Pf)(xh)\,dh\right) g(x)\\
         &=g(x)\int_H (Tf)([xh])g(xh)\,dh = g(x)(Tf)([x])\int\limits_H g(xh)\,dh\\
         &= (Tf)([x])\,g(x)=(Pf)(x)\quad (x\in G, \, f\in \L1o).
\end{align*}
Therefore, $P$ is a projection. The proof is complete.

\end{proof}

We do not know whether $J_\omega(G,H)$ has the trace extension property in general. The next lemma provides a sufficient condition for a complemented ideal to have the trace extension property.

\begin{lemma}\label{trace extension}
Let $A$ be a Banach algebra and $I$ be a closed complemented ideal in~$A$. Denote by $I_0$ the closure of
\[
\mathrm{lin}\{ at-ta:\, a\in A, t\in I\}.
\]
 Suppose that $A=I\oplus X$,where $X$ is a closed subspace of $A$ such that
$$
xy-yx\in I_0 \oplus X\quad (x,y\in X).
$$
Then $I$ has the trace extension property.
\end{lemma}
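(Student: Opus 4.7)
The plan is to construct $\tau$ explicitly as $\tau = \lambda \circ \pi_I$, where $\pi_I \colon A \to I$ is the bounded linear projection associated with the topological direct sum decomposition $A = I \oplus X$. Such a bounded projection exists precisely because $I$ is complemented, and hence $\tau \in A^*$. The extension requirement $\tau|_I = \lambda$ is immediate, since $\pi_I$ restricts to the identity on $I$.

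The substantive point is to verify the trace identity $\tau(ab) = \tau(ba)$ for all $a,b \in A$. The first preliminary step is to unpack the hypothesis $a\cdot\lambda = \lambda\cdot a$: with the dual bimodule action on $I^*$ inherited from the $A$-bimodule structure of $I$, this condition is equivalent to $\lambda(at) = \lambda(ta)$ for all $a \in A$ and $t \in I$, which, by linearity and continuity, means that $\lambda$ vanishes on the closed subspace $I_0$. Consequently, the trace identity for $\tau$ reduces to showing
\[
\pi_I(ab - ba) \in I_0 \qquad (a,b \in A).
\]

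To establish this, decompose $a = t_1 + x_1$ and $b = t_2 + x_2$ with $t_i \in I$, $x_i \in X$, and expand
\[
ab - ba = (t_1 t_2 - t_2 t_1) + (t_1 x_2 - x_2 t_1) + (x_1 t_2 - t_2 x_1) + (x_1 x_2 - x_2 x_1).
\]
Each of the first three terms already lies in $I$ and is visibly of the form $\alpha\beta - \beta\alpha$ with $\beta \in I$, so each lies in $I_0$ by definition; in particular $\pi_I$ fixes them. The fourth term lies in $I_0 \oplus X$ by the standing assumption on $X$, so its $I$-component is already in $I_0$. Adding the pieces and projecting, $\pi_I(ab-ba) \in I_0$, whence $\tau(ab-ba) = \lambda(\pi_I(ab-ba)) = 0$.

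There is no genuine obstacle internal to this proof; its brevity is the point. The real difficulty is shifted into the hypothesis itself, namely the task (to be carried out elsewhere, e.g.\ for $I = J_\omega(G,H)$) of producing a closed complement $X$ of $I$ with the commutator property $xy - yx \in I_0 \oplus X$. The hypothesis is tailored to kill exactly the one summand of the expansion of $ab - ba$ that is not automatically swallowed by $I_0$ through the trace identity on $\lambda$, which is precisely what makes the definition $\tau = \lambda \circ \pi_I$ succeed.
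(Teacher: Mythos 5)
Your proof is correct and follows essentially the same route as the paper: both define $\tau=\lambda\circ\pi_I$ (the paper writes it as $\lambda\oplus 0$ under $A^*=I^*\oplus X^*$), observe that the centrality of $\lambda$ means $\lambda|_{I_0}=0$, and use the hypothesis on $X$ to dispose of the only commutator term $x_1x_2-x_2x_1$ not automatically handled. Your reformulation via $\pi_I(ab-ba)\in I_0$ is a cosmetic repackaging of the paper's direct computation, not a different argument.
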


\begin{remark}
There are two important special cases for which conditions of Lemma~\ref{trace extension} are satisfied:

\setlength{\hangindent}{12pt}
\noindent
1. the complement $X$ of $I$ is a subalgebra of $A$;

\setlength{\hangindent}{12pt}
\noindent
 2. the complement $X$ is commutative, i.e., $xy=yx$ for all $x,y\in X$ (note that $xy$ may not be in $X$). In particular, this is the case if $A$ is Abelian.

\noindent Our Lemma~\ref{trace extension} generalizes \cite[Lemma~2.3]{LL}, where only the first case was concerned.
\end{remark}

\begin{proof}[Proof of Lemma~\ref{trace extension}.]
Let $\lambda\in I^*$ satisfy $\lambda\cdot a=a\cdot\lambda$ ($a\in A$). The condition really means $\lambda(ta)=\lambda(at)$ for all $t\in I$ and $a\in A$, or, equivalently, $\lambda|_{I_0} = 0$. Since $A=I\oplus X$, we have that $A^*=I^*\oplus X^*$. We show that $\tau=\lambda\oplus0$ is a trace extension of $\lambda$. Obviously, $\tau$ is a continuous linear functional on $A$, $\tau|_I=\lambda$, and $\tau|_{I_0 \oplus X} =0$. Now let $a,b\in A$ such that $a=t_1+x_1$ and $b=t_2+x_2$ with $t_1,t_2\in I$ and $x_1,x_2\in X$. We have $\lambda(t_1b) = \lambda(bt_1)$ and $\lambda(t_2x_1) = \lambda(x_1t_2)$. So
\[
\tau(ab)=\lambda(t_1b+x_1t_2)+\tau(x_1x_2)=\lambda(bt_1+t_2x_1)+\tau(x_1x_2) = \tau(ba) +\tau(x_1x_2-x_2x_1).
\]
Since $x_1x_2-x_2x_1 \in I_0 \oplus X$ by the assumption, $\tau(x_1x_2-x_2x_1)=0$. Therefore, $\tau(ab) = \tau(ba)$. This completes the proof.

\end{proof}

Combining Theorem~\ref{complemented} with Proposition~\ref{WA_ideal} and Lemma~\ref{trace extension}, we obtain the following.

\begin{proposition}\label{solv}
Let $G$ be a locally compact group, $H$ be a closed normal subgroup of $G$, and $\omega$ be a weight on $G$ bounded away from zero. Suppose that $X$ is a Banach space complement of $J_{\omega}(G,H)$ in $L^1(G,\omega)$ such that
$$
xy-yx\in J_0 \oplus X\quad (x,y\in X),
$$
where $J_0$ is the closure of $\mathrm{lin}\{f*j-j*f: \, f\in \L1o, j\in J_\omega(G,H)\}$.
Then weak amenability of $L^1(G,\omega)$ implies weak amenability of $L^1(G/H,\hat{\omega})$.
\end{proposition}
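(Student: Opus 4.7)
The plan is essentially to chain together the three preceding results: Theorem~\ref{complemented}, Lemma~\ref{trace extension}, and Proposition~\ref{WA_ideal}, since all the technical work has already been done.

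First I would identify the data needed to apply Lemma~\ref{trace extension} with $A = L^1(G,\omega)$ and $I = J_\omega(G,H)$. By Theorem~\ref{complemented} the ideal $J_\omega(G,H)$ is complemented in $L^1(G,\omega)$ via the projection $P$; in fact, any complement $X$ satisfying $L^1(G,\omega) = J_\omega(G,H) \oplus X$ will do, and such an $X$ is given by hypothesis. The subspace $I_0$ in Lemma~\ref{trace extension}, namely the closed linear span of commutators $at - ta$ with $a \in A$ and $t \in I$, is precisely $J_0$ in our setting. Hence the commutator condition $xy - yx \in J_0 \oplus X$ for $x, y \in X$ is exactly the hypothesis of the proposition, so Lemma~\ref{trace extension} yields that $J_\omega(G,H)$ has the trace extension property.

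Next I would invoke Proposition~\ref{WA_ideal}: weak amenability of $L^1(G,\omega)$ together with the trace extension property of $J_\omega(G,H)$ implies that the quotient Banach algebra $L^1(G,\omega)/J_\omega(G,H)$ is weakly amenable. Finally, as recorded after equation~(\ref{operator T}) in the discussion preceding Proposition~\ref{WA_ideal}, the homomorphism $T$ from~\eqref{operator T} descends to an isometric Banach algebra isomorphism
\[
L^1(G,\omega)/J_\omega(G,H) \cong L^1(G/H, \hat\omega),
\]
so weak amenability transfers to $L^1(G/H, \hat\omega)$, completing the proof.

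There is no real obstacle here; the argument is a direct assembly. The only point that merits a brief remark is the identification of $I_0$ with $J_0$: the definition of $J_0$ in the statement uses commutators of elements of $L^1(G,\omega)$ with elements of $J_\omega(G,H)$, which coincides verbatim with $\mathrm{lin}\{at - ta : a \in A,\, t \in I\}$ under the convolution product, so no additional verification is needed. All substantive content of the proposition has been absorbed into Theorem~\ref{complemented} (existence of the complement via $P$) and Lemma~\ref{trace extension} (the abstract trace extension criterion).
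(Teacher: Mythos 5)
Your proposal is correct and is exactly the paper's argument: the authors give no separate proof, stating only that the proposition follows by combining Theorem~\ref{complemented}, Lemma~\ref{trace extension}, and Proposition~\ref{WA_ideal} together with the isometric isomorphism $L^1(G,\omega)/J_\omega(G,H)\cong L^1(G/H,\hat\omega)$. Your identification of $I_0$ with $J_0$ and the assembly of the three ingredients matches the intended reasoning precisely.
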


We now consider the special case when $G=G_1\times G_2$, $H=G_2$, and $\omega=\omega_1\times\omega_2$ with $\omega_i$ bounded away from zero on $G_i$ ($i=1,2$). In this case $G/H=G_1$,
$$
\hat{\omega}(x_1)=\omega_1(x_1)\inf_{x_2\in G_2}\omega_2(x_2)=\mathrm{const}\cdot\omega_1(x_1),
$$
and the operator $T:L^1(G,\omega)\to L^1(G/H,\hat{\omega})\cong L^1(G_1, \omega_1)$ is precisely given by
$$
T(f)(x_1)=\int\limits_{G_2}\, f(x_1,x_2)\,dx_2\quad (x_1\in G_1).
$$
Consider a non-negative function $h\in C_{00}(G_2)$ such that
$$
\int\limits_{G_2} h(x_2)\,dx_2=1.
$$
Then $g(x_1,x_2)=h(x_2)$ satisfies
$$
\int\limits_{G_2}{ g(x_1,x_2)}dx_2=\int\limits_{G_2}{h(x_2)}dx_2=1,\quad 
$$
\[
\int\limits_{G_2}{g(x_1,x_2)\omega(x_1,x_2)}dx_2=\omega_1(x_1)\int\limits_{G_2}{h(x_2)\omega_2(x_2)}dy =\textrm{const}\cdot\hat{\omega}(x_1)\quad (x_1\in G_1).
\]
Note that $L^1(G,\omega)=L^1(G_1,\omega_1)\hat{\otimes}L^1(G_2,\omega_2)$, and so we have
\begin{equation}\label{NX_tensor}
J_{\omega}(G,H)=L^1(G_1,\omega_1)\hat{\otimes} I_2,\quad X=L^1(G_1,\omega_1)\hat{\otimes}(\mathbb{C}h),
\end{equation}
where
\[
 I_2=\left\{f\in L^1(G_2,\omega_2): \int\limits_{G_2} f(x_2)\,dx_2=0\right\}.
\]

\begin{proposition}\label{tensor}
Let $G_1$, $G_2$ be locally compact groups and $\omega_i$ be a weight on $G_i$ bounded away from zero ($i=1,2$). Suppose that $L^1(G_1\times G_2,\omega_1\times\omega_2)$ is weakly amenable. Then both $L^1(G_1,\omega_1)$ and $L^1(G_2,\omega_2)$ are also weakly amenable.
\end{proposition}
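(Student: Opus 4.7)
The plan is to derive both conclusions from Proposition~\ref{solv}, applied to $G=G_1\times G_2$ first with $H=G_2$ (to conclude weak amenability of $L^1(G_1,\omega_1)$) and then, symmetrically, with $H=G_1$. The preceding discussion already identifies $L^1(G/H,\hat\omega)$ with $L^1(G_1,\omega_1)$ and exhibits the concrete complement $X=L^1(G_1,\omega_1)\hat\otimes(\bC h)$ of $J_\omega(G,H)=L^1(G_1,\omega_1)\hat\otimes I_2$, so the only substantive task is to verify the commutator hypothesis
\[
xy-yx\in J_0\oplus X\qquad(x,y\in X).
\]

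First I would observe that $J_0\oplus X$ is closed: since $J_0\subset J_\omega(G,H)$ and $J_\omega(G,H)\cap X=\{0\}$, one has $J_0\oplus X=P_J^{-1}(J_0)$, where $P_J$ is the continuous projection onto $J_\omega(G,H)$ complementary to $P$ from Theorem~\ref{complemented}. By bilinearity and continuity of multiplication it therefore suffices to check the condition on elementary tensors $x=f_1\otimes h$ and $y=g_1\otimes h$. The normalization $\int_{G_2}h=1$ yields $\int_{G_2} h*h=1$, so $k:=h*h-h\in I_2$, and
\[
xy-yx=(f_1*g_1-g_1*f_1)\otimes h \;+\; (f_1*g_1-g_1*f_1)\otimes k .
\]
The first summand already lies in $X$, so the whole matter reduces to showing that $(f_1*g_1-g_1*f_1)\otimes k\in J_0$.

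For this step I would invoke a bounded approximate identity $(h_\alpha)$ of $L^1(G_2,\omega_2)$, which exists by the standard construction $h_\alpha=\chi_{U_\alpha}/\mu(U_\alpha)$ for a neighborhood basis $(U_\alpha)$ of $e$ on which $\omega_2$ is uniformly close to $\omega_2(e)$, combined with continuity of translation in $L^1(G_2,\omega_2)$. Since $g_1\otimes k\in J_\omega(G,H)$, the commutator
\[
[f_1\otimes h_\alpha,\,g_1\otimes k]=(f_1*g_1)\otimes(h_\alpha*k)-(g_1*f_1)\otimes(k*h_\alpha)
\]
belongs to $J_0$ for each $\alpha$. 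Passing to the limit $h_\alpha*k\to k$ and $k*h_\alpha\to k$ and using the closedness of $J_0$ yields $(f_1*g_1-g_1*f_1)\otimes k\in J_0$, so Proposition~\ref{solv} applies and produces weak amenability of $L^1(G_1,\omega_1)$; applying the same argument with the roles of $G_1$ and $G_2$ interchanged (picking a nonnegative $h'\in L^1(G_1,\omega_1)$ with $\int h'=1$) yields weak amenability of $L^1(G_2,\omega_2)$. The main technical point is the use of the bounded approximate identity and the convergence $h_\alpha*k\to k$ in the weighted norm; these rest on continuity of $\omega_2$ and the assumption $\omega_2$ is bounded away from zero, and are not set up explicitly earlier in the paper, so the proof would either cite a standard reference or include a short verification.
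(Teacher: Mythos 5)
Your proposal is correct and follows essentially the same route as the paper: reduce to Proposition~\ref{solv} with $H=G_2$, split the commutator of $f_1\otimes h$ and $g_1\otimes h$ into a part in $X$ and the part $(f_1*g_1-g_1*f_1)\otimes(h*h-h)$, and absorb the latter into $J_0$ via a bounded approximate identity of $L^1(G_2,\omega_2)$. Your added remarks (closedness of $J_0\oplus X$ and the justification of the approximate identity) are sound refinements of details the paper leaves implicit, but they do not change the argument.
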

\begin{proof}
Because of the symmetry, it is enough to show that $L^1(G_1,\omega_1)$ is weakly amenable. For this case, as has been discussed,
\[
L^1(G_1\times G_2,\omega_1\times\omega_2) = J_{\omega}(G,H) \oplus X
\]
with $J_\omega(G,H)$ and $X$ being given by (\ref{NX_tensor}).

For $f_1,f_2\in L^1(G_1,\omega_1)$ we have
\begin{align*}
&(f_1\otimes h)(f_2\otimes h)-(f_2\otimes h)(f_1\otimes h) = (f_1*f_2 - f_2*f_1)\otimes (h*h)\\
&= (f_1*f_2 - f_2*f_1)\otimes (h*h - h) + (f_1*f_2 - f_2*f_1)\otimes h.
\end{align*}
The second term of the last expression belongs to $X$. We show that the first term belongs to $J_0$. Denote $k = h*h - h$. It is easy to see that $k\in I_2$ and so $f_2\otimes k\in J_\omega(G,H)$. Let $(e_i)$ be a bounded approximate identity of $L^1(G_2,\omega_2)$. Then for each $i$
\[
(f_1\otimes e_i)(f_2\otimes k) - (f_2\otimes k)(f_1\otimes e_i)\in J_0,
\]
and hence
\begin{align*}
(f_1*f_2 - f_2*f_1)\otimes k &= \lim_i \bigl((f_1*f_2)\otimes (e_i*k) - (f_2*f_1)\otimes (k*e_i)\bigr)\\
                             &= \lim_i \bigl((f_1\otimes e_i)(f_2\otimes k) - (f_2\otimes k)(f_1\otimes e_i)\bigr) \in J_0.
\end{align*}
So we have shown that $(f_1\otimes h)(f_2\otimes h)-(f_2\otimes h)(f_1\otimes h) \in J_0 \oplus X$, and the condition of Proposition~\ref{solv} holds. Thus, $L^1(G_1,\omega_1) \cong \LH1o$ is weakly amenable if $L^1(G_1\times G_2,\omega_1\times\omega_2)$ is weakly amenable.

\end{proof}

\section{Beurling algebra of subgroups}\label{commu}
In spite of Proposition~\ref{tensor}, weak amenability of $L^1(G_1\times G_2, \omega)$ does not necessarily imply weak amenability of $L^1(G_1,\omega_1)$ even if the groups $G_1$, $G_2$ are commutative, where $\omega_1(x) =\omega(x,e_2)$ and $e_2$ is the unit of $G_2$. We give a counterexample in the following.

Let $G_1$, $G_2$ be Abelian locally compact groups and $G=G_1\times G_2$. Suppose that there exist continuous non-zero group homomorphisms $\Phi_i: G_i\to\mathbb{R}$ ($i=1,2$). For any $\alpha,\,\beta>0$ we define the function $\omega$ on $G$ as follows:
\begin{equation}\label{67}
\omega(x,y)=\left(1+|\Phi_1(x)|\right)^{\alpha}\left(1+|\Phi_1(x)+\Phi_2(y)|\right)^{\beta}\quad (x\in G_1,\,y\in G_2).
\end{equation}
It is readily seen that $\omega$ is a weight on $G$, and
$$
\omega_1(x)=\omega(x,e_2)=(1+|\Phi_1(x)|)^{\alpha+\beta} \quad (x\in G_1).
$$

\begin{example}\label{abelian_sum}
Let $G_1$, $G_2$, and $\omega$ be as above. If
$0<\alpha,\,\beta<1/2$ and $\alpha+\beta\ge 1/2$,
then $L^1(G,\omega)$ is weakly amenable, but $L^1(G_1,\omega_1)$ is not weakly amenable.
\end{example}
\begin{proof}

Since $\Phi_1$: $G_1 \to \mathbb R$ is a non-trivial continuous group homomorphism and
$$
\sup_{x\in\,G_1}\,\frac{|\Phi_1(x)|}{\omega_1(x)\omega_1(x^{-1})}= \sup_{x\in\,G_1}\,\frac{|\Phi_1(x)|}{(1+|\Phi_1(x)|)^{2(\alpha+\beta)}}<\infty
$$
if $\al+\be \ge 1/2$, $L^1(G_1,\omega_1)$ is not weakly amenable due to Theorem~\ref{Thm_zhang}. To show that $L^1(G,\omega)$ is weakly amenable, we consider any non-trivial continuous group homomorphism $\Phi: G\to\mathbb{R}$.  We have
$$
\sup_{g\in G}\,\frac{|\Phi(g)|}{\omega(g)\omega(g^{-1})}=\sup_{x\in G_1,y\in G_2}\frac{|\Phi(x,e_2)+\Phi(e_1,y)|}{\left(1+|\Phi_1(x)|\right)^{2\alpha}\left(1+ |\Phi_1(x)+\Phi_2(y)|\right)^{2\beta}}.
$$

Case 1. If there is $y\in G_2$ such that $\Phi(e_1, y)\ne 0$, then
$$
\sup_{g\in G}\,\frac{|\Phi(g)|}{\omega(g)\omega(g^{-1})}\ge \sup_{n\in \bN}\frac{|\Phi(e_1,y^n)|}{\omega(e_1,y^n)\omega(e_1,y^{-n})} = \sup_{n\in \bN}\frac{n\Phi(e_1,y)}{(1+ n |\Phi_2(y)|)^{2\be}} = \infty,
$$
since $\beta<1/2$.

Case 2. If $\Phi(e_1, y) = 0$ for all $y\in G_2$, then we can choose $x_0\in G_1$ such that $\Phi(x_0,e_2)\ne 0$. We can also choose $y\in G_2$ such that $\Phi_2(y) \neq 0$. For each $x\in G_1$, we take an $n=n(x)\in \bN$ such that
\[
\left|n+\frac{\Phi_1(x)}{\Phi_2(y)}\right|\le 1.
\]
It then follows that
\[
\left|\Phi_1(x)+\Phi_2\left(y^{n}\right)\right|=|\Phi_1(x)+n\Phi_2(y)|\le |\Phi_2(y)|.
\]
Hence,
\begin{align*}
\sup_{g\in G}\,\frac{|\Phi(g)|}{\omega(g)\omega(g^{-1})}
&\ge\ \sup\limits_{x\in\,G_1}\,\frac{|\Phi(x,e_2)|} {\left(1+|\Phi_1(x)|\right)^{2\alpha}(1+|\Phi_1(x)+\Phi_2(y^{n})|)^{2\beta}}
\\
&\ge\ \sup\limits_{x\in\,G_1}\,\frac{|\Phi(x,e_2)|} {\left(1+|\Phi_1(x)|\right)^{2\alpha}(1+|\Phi_2(y)|)^{2\beta}}
\\
&\ge \sup\limits_{m\in \bN}\,\frac{m|\Phi(x_0,e_2)|} {\left(1+m|\Phi_1(x_0)|\right)^{2\alpha}(1+|\Phi_2(y)|)^{2\beta}} =\infty,
\end{align*}
because $\alpha<1/2$.

So, we have shown that
$$
\sup_{g\in G}\,\frac{|\Phi(g)|}{\omega(g)\omega(g^{-1})}=\infty
$$
for every non-trivial continuous group homomorphism $\Phi: G\to\mathbb{R}$. Therefore, $\L1o$ is weakly amenable by Theorem~\ref{Thm_zhang} (see \cite[Theorem~3.5]{zhang}).

\end{proof}

Example~\ref{abelian_sum} also shows that, unlike the group algebra case, in general weak amenability of a Beurling algebra on an Abelian group $G$ does not imply weak amenability of the induced Beurling algebra on a subgroup of $G$.
However, the implication is true for certain ``large'' open subgroups. We first give a technical lemma dealing with extension of a group homomorphism.

\begin{lemma}\label{extension}
Let $G$ be a locally compact Abelian group and $H$ be an open subgroup of $G$. Then any continuous group homomorphism $\Phi: H\to \mathbb{C}$ can be extended to a continuous group homomorphism $\tilde{\Phi}: G\to \mathbb{C}$.
\end{lemma}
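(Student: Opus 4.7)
The plan is to combine an algebraic extension result with a simple topological observation. Since $H$ is open in $G$, it is automatically closed, and the quotient group $G/H$ is discrete. Crucially for what follows, every neighborhood basis of $e$ in $H$ is also a neighborhood basis of $e$ in $G$.

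First I would produce the algebraic extension, ignoring continuity. The additive group $(\mathbb{C},+)$ is a $\mathbb{Q}$-vector space, hence divisible, and hence injective in the category of abelian groups (Baer's criterion: a homomorphism $n\mathbb{Z}\to\mathbb{C}$ sending $n\mapsto c$ extends to $\mathbb{Z}\to\mathbb{C}$ by $1\mapsto c/n$). Applying injectivity of $\mathbb{C}$ to the inclusion $H\hookrightarrow G$ of abelian groups, the continuous homomorphism $\Phi:H\to\mathbb{C}$ extends (as an abstract group homomorphism) to a homomorphism $\tilde{\Phi}:G\to\mathbb{C}$ with $\tilde{\Phi}|_H=\Phi$.

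Next I would verify that $\tilde{\Phi}$ is continuous. Since $\tilde{\Phi}$ is a group homomorphism between topological groups, continuity on all of $G$ follows from continuity at the identity $e$. Fix $\varepsilon>0$. By continuity of $\Phi$ at $e\in H$, there is a neighborhood $U$ of $e$ in $H$ with $|\Phi(u)|<\varepsilon$ for all $u\in U$. Because $H$ is open in $G$, $U$ is also a neighborhood of $e$ in $G$, and for $u\in U\subseteq H$ we have $\tilde{\Phi}(u)=\Phi(u)$, so $|\tilde{\Phi}(u)|<\varepsilon$. Hence $\tilde{\Phi}$ is continuous at $e$, and therefore on all of $G$.

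There is no real obstacle here: the only non-routine input is the injectivity of $(\mathbb{C},+)$ as an abelian group, and the openness of $H$ reduces continuity of the extension to continuity of the original $\Phi$. If one prefers to avoid invoking injectivity as a black box, one can construct $\tilde{\Phi}$ explicitly by well-ordering a transversal $\{x_i\}_{i\in I}$ of $H$ in $G$ and extending $\Phi$ one coset at a time using divisibility of $\mathbb{C}$ to handle the (possibly torsion) relations $x_i^{n_i}\in$ the subgroup generated by $H$ and the previously-handled $x_j$'s; this transfinite construction is exactly the proof of Baer's criterion specialized to our situation.
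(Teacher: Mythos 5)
Your proof is correct, and it takes a genuinely different route from the paper. The paper runs a Zorn's lemma argument directly on continuous extensions to open subgroups containing $H$: for a maximal such extension it adjoins a single element $g$, splits into two cases according to whether some power of $g$ already lies in the domain (handling the torsion case by dividing $\Phi(g^{m_0})$ by $m_0$ in $\mathbb{C}$), writes down the extension explicitly, and then verifies well-definedness and continuity of each one-step extension by a net argument that again uses openness of $H$. You instead decouple the algebra from the topology: the abstract extension exists because $(\mathbb{C},+)$ is divisible, hence injective as an abelian group, and continuity then comes for free because a homomorphism of topological groups is continuous as soon as it is continuous at the identity, and \emph{any} extension agrees with $\Phi$ on the open neighborhood $H$ of $e$. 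As you note, unwinding the proof of injectivity via Baer's criterion recovers essentially the paper's case analysis, so the algebraic content is the same; what your argument genuinely buys is the continuity step, which collapses from a net computation performed at every stage of the transfinite construction to a single one-line observation at the end. The paper's version is more self-contained and explicit, which matters if one wants the concrete formula for the extension, but your packaging is cleaner.
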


\begin{proof}
By Zorn's Lemma, it suffices to show that for every $g\in G$ we can extend $\Phi$ to the open subgroup $H_g=\bigcup\limits_{n\in\mathbb{Z}} g^nH=\{g^nh: h\in H,\,n\in\mathbb{Z}\}$ of $G$.

Suppose first that there exists $m\in\mathbb{N}$ such that $g^m\in H$. Let $m_0$ be the smallest such number. Then we denote $\alpha=\frac 1 {m_0} \Phi(g^{m_0})$ and define $\tilde{\Phi}(g^nh)=n\alpha+\Phi(h)$ ($h\in H,\,n\in\mathbb{Z}$). It is easy to see that $\tilde{\Phi}$ is a group homomorphism on $H_g$. In fact, the only non-trivial assertion one needs to verify is that the extension is well-defined, i.e., if $g^{n_1}h_1=g^{n_2}h_2$ then $n_1\alpha+\Phi(h_1)=n_2\alpha+\Phi(h_2)$. But in this case $g^{n_1-n_2}=h_2h_1^{-1}\in H$, and so $n_1-n_2=km_0$ for some $k\in\mathbb{Z}$. Because $\Phi$ is a group homomorphism on $H$, we then have
$$
\Phi(h_2)-\Phi(h_1)=\Phi(h_2h_1^{-1})=\Phi(g^{n_1-n_2})=k\Phi(g^{m_0})=km_0\alpha=(n_1-n_2)\alpha,
$$
which implies the desired equality $n_1\alpha+\Phi(h_1)=n_2\alpha+\Phi(h_2)$. The extension $\tilde{\Phi}$ is also continuous on $H_g$. Indeed, let $\{t_{\gamma}=g^{n_\gamma}h_{\gamma}\}_{\gamma\in\Gamma}\subset H_g$ be a net that converges to some $t=g^nh\in H_g$. We have $g^{n_{\gamma}-n}h_{\gamma}\overset{\gamma}{\to} h$. Since $H$ is open, there is $\gamma_0\in\Gamma$ such that $g^{n_{\gamma}-n}\in H$ for $\gamma\ge\gamma_0$. Then from the continuity of $\Phi$ on $H$ it follows that
$$
\tilde{\Phi}(g^{n_{\gamma}-n}h_{\gamma})={\Phi}(g^{n_{\gamma}-n}h_{\gamma})\overset{\gamma}{\to}\Phi(h)=\tilde{\Phi}(h).
$$
Using the fact that $\tilde{\Phi}$ is a group homomorphism, we finally obtain
$$
\tilde{\Phi}(t_{\gamma})=\tilde{\Phi}(g^{n_{\gamma}}h_{\gamma})=\tilde{\Phi}(g^{n_{\gamma}-n}h_{\gamma})+ \tilde{\Phi}(g^n)\overset{\gamma}{\to}\tilde{\Phi}(h)+ \tilde{\Phi}(g^n)=\tilde{\Phi}(g^nh)=\tilde{\Phi}(t).
$$

Now assume that $g^n\notin H$ for all $n\in\mathbb{N}$. Then we put $\tilde{\Phi}(g^nh)=\Phi(h)$ ($h\in H$, $n\in\mathbb{Z}$). Obviously, $\tilde{\Phi}$ is a group homomorphism on $H_g$. We now show that it is continuous. Let $g^{n_\gamma}h_{\gamma}\overset{\gamma}{\to} g^nh$ ($n_{\gamma},n\in\mathbb{Z}$, $h_{\gamma},h\in H$). Then, as above, $g^{n_{\gamma}-n}h_{\gamma}\overset{\gamma}{\to} h$ and, because $H$ is open, there is $\gamma_0$ such that $g^{n_{\gamma}-n}\in H$ for $\gamma\ge\gamma_0$. But our assumption on $g$ implies that this is possible only when $n_{\gamma}=n$ for $\gamma\ge\gamma_0$, and so $h_{\gamma}\overset{\gamma}{\to} h$. Therefore,
\[
\tilde{\Phi}(g^{n_{\gamma}}h_{\gamma})=\Phi(h_{\gamma})\overset{\gamma}{\to}\Phi(h)=\tilde{\Phi}(g^nh).
\]
 This shows that $\tilde{\Phi}$ is continuous. The proof is complete.

\end{proof}

In general, one cannot expect that a group homomorphism  $\Phi$ from a normal subgroup $H$ of $G$ has an extension to the whole $G$. In fact, if such extension exists then $\Phi$ must satisfy $\Phi(ghg^{-1}) = \Phi(h)$ for all $g\in G$ and $h\in H$. It turns out that the latter condition is also sufficient for semidirect product group $G = L\ltimes H$, where $H$ is a normal subgroup and $L$ is a subgroup of $G$ such that $L\cap H = \{e\}$.

\begin{proposition}
Let $G=L\ltimes H$ and $\Phi:H\to\mathbb{R}$ be a group homomorphism. Then $\Phi$ extends to a group homomorphism $\widetilde{\Phi}:G\to\mathbb{R}$ if and only if
\beq\label{semidirect}
\Phi(lhl^{-1})=\Phi(h)  \quad (l\in L, h\in H).
\eeq
 Moreover, if $H$ is open in $G$ then $\widetilde{\Phi}$ is continuous whenever $\Phi$ is continuous.
\end{proposition}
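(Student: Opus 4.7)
The plan is to use the unique decomposition $g = lh$ with $l\in L$, $h\in H$ that the semidirect product structure supplies (every element of $G$ is such a product by $G = LH$, and uniqueness follows from $L\cap H = \{e\}$), and to define the extension by the most natural formula $\widetilde\Phi(lh) := \Phi(h)$, in effect declaring $\widetilde\Phi$ to be trivial on $L$. The proposition then splits into three pieces: necessity, the algebraic verification of the homomorphism property, and continuity.

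For necessity, suppose $\widetilde\Phi: G\to\mathbb{R}$ is a group homomorphism extending $\Phi$. Since $H$ is normal, $lhl^{-1}\in H$ for all $l\in L$, $h\in H$, so
\[
\Phi(lhl^{-1}) \;=\; \widetilde\Phi(lhl^{-1}) \;=\; \widetilde\Phi(l)+\widetilde\Phi(h)-\widetilde\Phi(l) \;=\; \Phi(h),
\]
which is exactly (\ref{semidirect}).

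For sufficiency, the formula $\widetilde\Phi(lh) := \Phi(h)$ is well-defined by uniqueness of the decomposition, and $\widetilde\Phi|_H = \Phi$ by construction. To check the homomorphism property, perform the key rearrangement
\[
(l_1 h_1)(l_2 h_2) \;=\; (l_1 l_2)\,(l_2^{-1} h_1 l_2)\, h_2,
\]
which is the legitimate semidirect decomposition of the product because $l_2^{-1}h_1 l_2\in H$ by normality and $l_1 l_2\in L$. Using (\ref{semidirect}) with $l$ replaced by $l_2^{-1}$,
\[
\widetilde\Phi\bigl((l_1 h_1)(l_2 h_2)\bigr) \;=\; \Phi(l_2^{-1} h_1 l_2) + \Phi(h_2) \;=\; \Phi(h_1) + \Phi(h_2) \;=\; \widetilde\Phi(l_1 h_1) + \widetilde\Phi(l_2 h_2).
\]

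Finally, assume $H$ is open and $\Phi$ is continuous. Since $H$ is an open subgroup, every coset is open, so $G/H$ is discrete; consequently the canonical projection $\pi: G \to G/H$ is continuous with discrete codomain. If $g_\alpha \to g$ in $G$ and $g = lh$, then $\pi(g_\alpha) \to \pi(g)$ forces $\pi(g_\alpha) = \pi(g)$ eventually, so eventually $g_\alpha = l h'_\alpha$ for some $h'_\alpha \in H$; then $h'_\alpha = l^{-1} g_\alpha \to l^{-1}g = h$ in $H$, and continuity of $\Phi$ on $H$ yields $\widetilde\Phi(g_\alpha) = \Phi(h'_\alpha) \to \Phi(h) = \widetilde\Phi(g)$. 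The only substantive step is the semidirect rearrangement in the sufficiency argument; once condition (\ref{semidirect}) is available to absorb the conjugation factor $l_2^{-1}h_1 l_2$, both the algebraic identity and the continuity assertion reduce to bookkeeping.
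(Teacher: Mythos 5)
Your proof is correct and follows essentially the same route as the paper: the same extension formula $\widetilde\Phi(lh)=\Phi(h)$, the same rearrangement $(l_1h_1)(l_2h_2)=(l_1l_2)(l_2^{-1}h_1l_2h_2)$ with condition (\ref{semidirect}) absorbing the conjugation, and a continuity argument that is the paper's (eventually $l_i=l$, so $h_i\to h$) merely rephrased via discreteness of $G/H$.
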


\begin{proof}
The necessity part is trivial.

For sufficiency, we note that every $g\in G$ may be uniquely expressed in the form $g=lh$. 
 Suppose that (\ref{semidirect}) holds. We then extend $\Phi$ to $\widetilde{\Phi}$ on the whole $G$ simply by letting 
 $\widetilde\Phi(g) = \Phi(h)$ ($g = lh $, $l\in L$, $h\in H$). 
 It is a group homomorphism because for any $g_1=l_1h_1,\, g_2=l_2h_2\in G$ we have
\begin{align*}
\widetilde{\Phi}(g_1g_2)&=\widetilde{\Phi}(l_1h_1l_2h_2)= \widetilde{\Phi}\left((l_1l_2)(l_2^{-1}h_1l_2h_2)\right)=\Phi(l_2^{-1}h_1l_2h_2)\\
                        &=\Phi(l_2^{-1}h_1l_2)+\Phi(h_2)=\Phi(h_1)+ \Phi(h_2)=\widetilde{\Phi}(g_1)+\widetilde{\Phi}(g_2).
\end{align*}

Assume now that $H$ is open in $G$ and that $\Phi$ is continuous on $H$.  Let $g_i=l_ih_i\to g=lh$ ($h_i,h\in H$, $l_i,l\in L$). Then $l^{-1}l_ih_i\to h\in H$. Since $H$ is open, it follows that $l^{-1}l_ih_i\in H$ ($i\ge i_0$) for some $i_0$. Then $l^{-1}l_i\in H\cap L$ and hence $l_i = l$ for $i\ge i_0$. This implies that $h_i\to h$. Using the continuity of $\Phi$ we finally obtain
$$
\widetilde{\Phi}(g_i)=\Phi(h_i)\to\Phi(h)= \widetilde{\Phi}(g).
$$
Therefore, $\widetilde{\Phi}$ is also continuous.
\end{proof}


\begin{proposition}\label{compact quotient}
Let $G$ be a locally compact IN group and $\omega$ be a weight on it. Suppose that $H$ is a commutative subgroup of $G$, and suppose that every continuous group homomorphism $\Phi:H\to\bC$ can be extended to the whole $G$. If there is $c>0$ such that for each $x\in G$ there is $k=k(x)\in \bN$ for which $x^k \in H$ and
\begin{equation}\label{grow control}
\frac{\omega(x^k)\omega(x^{-k})}{k} \leq c\, \omega(x)\omega(x^{-1}),
\end{equation}
then weak amenability of $L^1(G,\omega)$ implies weak amenability of $L^1(H,\omega|_H)$.
\end{proposition}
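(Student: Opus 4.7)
The natural route is to argue by contraposition, combining the three main tools already established in the paper: the characterization of weak amenability for Abelian Beurling algebras (Theorem~\ref{Thm_zhang}), the IN group obstruction (Theorem~\ref{IN}), and the homomorphism extension hypothesis. Concretely, I would assume that $L^1(H,\omega|_H)$ is \emph{not} weakly amenable and produce a non-trivial continuous homomorphism $\tilde\Phi:G\to\bC$ witnessing the failure of weak amenability of $L^1(G,\omega)$ via Theorem~\ref{IN}.

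The plan is as follows. First, since $H$ is commutative, Theorem~\ref{Thm_zhang} supplies a non-zero continuous group homomorphism $\Phi:H\to\bC$ with
\[
M:=\sup_{h\in H}\,\frac{|\Phi(h)|}{\omega(h)\omega(h^{-1})}<\infty.
\]
Next, using the extension hypothesis I would lift $\Phi$ to a continuous group homomorphism $\tilde\Phi:G\to\bC$. This $\tilde\Phi$ is clearly non-trivial because $\tilde\Phi|_H=\Phi\not\equiv 0$. The heart of the argument is then the following computation: for any $x\in G$, let $k=k(x)\in\bN$ be an integer with $x^k\in H$ satisfying the growth hypothesis (\ref{grow control}). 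Because $\tilde\Phi$ is a homomorphism, $\tilde\Phi(x^k)=k\tilde\Phi(x)$, so
\[
\frac{|\tilde\Phi(x)|}{\omega(x)\omega(x^{-1})}=\frac{1}{k}\cdot\frac{|\tilde\Phi(x^k)|}{\omega(x)\omega(x^{-1})}
\le c\,\frac{|\tilde\Phi(x^k)|}{\omega(x^k)\omega(x^{-k})}
=c\,\frac{|\Phi(x^k)|}{\omega(x^k)\omega(x^{-k})}\le cM,
\]
where in the last inequality I used $x^k\in H$ and $\tilde\Phi|_H=\Phi$. Taking the supremum over $x\in G$ yields
\[
\sup_{x\in G}\,\frac{|\tilde\Phi(x)|}{\omega(x)\omega(x^{-1})}\le cM<\infty.
\]
Since $G$ is an IN group and $\tilde\Phi$ is a non-trivial continuous homomorphism on $G$ satisfying this bound, Theorem~\ref{IN} forces $L^1(G,\omega)$ to fail weak amenability, contradicting our hypothesis. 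Hence $L^1(H,\omega|_H)$ must have been weakly amenable.

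The main substantive ingredient is really the growth control (\ref{grow control}), which transfers the diagonal-boundedness-type condition from $H$ up to all of $G$ by ``pulling back'' along the power map $x\mapsto x^k$; the factor $k$ in the denominator of (\ref{grow control}) is exactly what compensates the linear growth $\tilde\Phi(x^k)=k\tilde\Phi(x)$. I expect no serious obstacle beyond checking that the extension $\tilde\Phi$ supplied by the hypothesis is continuous (this is built into the extension hypothesis, cf.\ Lemma~\ref{extension}) and that the $k=k(x)$ exists for every $x\in G$ (which is exactly the quantitative assumption in the statement). The remainder is bookkeeping with the two cited theorems.
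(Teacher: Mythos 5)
Your proposal is correct and follows essentially the same route as the paper: contraposition via Theorem~\ref{Thm_zhang} on the commutative subgroup $H$, extension of the resulting homomorphism to $G$, the identity $\tilde\Phi(x^k)=k\tilde\Phi(x)$ combined with the growth control (\ref{grow control}) to obtain the uniform bound on $G$, and Theorem~\ref{IN} to conclude. The paper's own computation is the same estimate written as a product of three factors, so there is nothing to add.
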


\begin{remark}
In particular, the conditions of Proposition~\ref{compact quotient} are satisfied when $G/H$ is a torsion group (see \cite[A.1]{HR}).
\end{remark}

\begin{proof}[Proof of Proposition~\ref{compact quotient}.]
If $L^1(H,\omega|_H)$ is not weakly amenable, by Theorem~\ref{Thm_zhang} there is a non-trivial continuous group homomorphism $\Phi:H\to\mathbb{C}$ such that
$$
\sup\limits_{h\in\,H}\,\frac{|\Phi(h)|}{\omega(h)\omega(h^{-1})} = r<\infty.
$$
By our assumption, $\Phi$ can be extended to a continuous group homomorphism $\widetilde{\Phi}:G\to\mathbb{R}$. We have
\[
\frac{|\widetilde{\Phi}(x)|}{\omega(x)\omega(x^{-1})} = \frac{|\Phi(x^k)|}{\omega(x^k)\omega(x^{-k})} \, \frac{\omega(x^k)\omega(x^{-k})}{k} \, \frac{1}{\omega(x)\omega(x^{-1})}\, \leq \, rc
\]
since $x^k\in H$, where $k = k(x)\in\mathbb{N}$ is such that (\ref{grow control}) is satisfied. Then, by Theorem~\ref{IN}, $\L1o$ is not weakly amenable.

\end{proof}

\begin{corollary}
Let $G$ be a locally compact [IN] group and $H$ be a commutative subgroup of $G$ of finite index. Suppose that each continuous group homomorphism from $H$ to $\bC$ can be continuously extended to the whole $G$. Then, for every weight $\omega$ on $G$ such that $\L1o$ is weakly amenable, $L^1(H,\omega|_H)$ is also weakly amenable.
\end{corollary}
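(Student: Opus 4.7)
The strategy is to prove the contrapositive directly, without verifying the growth condition~(\ref{grow control}) of Proposition~\ref{compact quotient}, which need not hold for arbitrary weights in the finite-index setting. Suppose $L^1(H,\omega|_H)$ is not weakly amenable; we aim to produce a non-zero continuous group homomorphism $\widetilde\Phi\colon G\to\bC$ for which $\sup_{g\in G}|\widetilde\Phi(g)|/(\omega(g)\omega(g^{-1}))$ is finite, and then invoke Theorem~\ref{IN}.

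First, because $H$ is commutative (and may be assumed closed, hence also open in $G$ as a finite-index closed subgroup), Theorem~\ref{Thm_zhang} applied to $L^1(H,\omega|_H)$ yields a non-zero continuous homomorphism $\Phi\colon H\to\bC$ with
\[
r:=\sup_{h\in H}\frac{|\Phi(h)|}{\omega(h)\omega(h^{-1})}<\infty.
\]
By the standing extension hypothesis, $\Phi$ extends to a continuous homomorphism $\widetilde\Phi\colon G\to\bC$, which is necessarily non-zero since $\widetilde\Phi|_H=\Phi$.

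The main step is to transfer the bound from $H$ to all of $G$ using the finite-index structure, thereby sidestepping the subtlety that $\omega(x^k)\omega(x^{-k})$ is only controlled by $(\omega(x)\omega(x^{-1}))^k$ via submultiplicativity. Choose coset representatives $g_1,\dots,g_n$ with $G=\bigsqcup_{i=1}^n g_iH$, where $n=[G:H]$, and set
\[
C=\max_{1\le i\le n}\omega(g_i)\omega(g_i^{-1}),\qquad M=\max_{1\le i\le n}|\widetilde\Phi(g_i)|,
\]
both finite. For $g\in G$, write $g=g_ih$ with $h\in H$. Then $\widetilde\Phi(g)=\widetilde\Phi(g_i)+\Phi(h)$ gives
\[
|\widetilde\Phi(g)|\le M+r\,\omega(h)\omega(h^{-1}).
\]
On the other hand, submultiplicativity applied to the factorizations $h=g_i^{-1}g$ and $h^{-1}=g^{-1}g_i$ yields $\omega(g)\omega(g^{-1})\ge \omega(h)\omega(h^{-1})/C$. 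Dividing and using $\omega(h)\omega(h^{-1})\ge\omega(e)\ge 1$, we obtain
\[
\frac{|\widetilde\Phi(g)|}{\omega(g)\omega(g^{-1})}\le \frac{CM}{\omega(h)\omega(h^{-1})}+rC\le CM+rC,
\]
uniformly in $g\in G$. Theorem~\ref{IN} now implies that $L^1(G,\omega)$ is not weakly amenable, contradicting the hypothesis.

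I do not anticipate a serious obstacle; the only real content is that the finite number of cosets lets the values $|\widetilde\Phi(g_i)|$ and $\omega(g_i)\omega(g_i^{-1})$ be absorbed into absolute constants, which is precisely what the approach of Proposition~\ref{compact quotient} through the identity $\widetilde\Phi(x)=\Phi(x^{k(x)})/k(x)$ cannot accomplish without the additional growth condition~(\ref{grow control}).
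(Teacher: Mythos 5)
Your proof is correct and follows essentially the same route as the paper: both argue the contrapositive, obtain $\Phi$ on $H$ from Theorem~\ref{Thm_zhang}, extend it to $\widetilde\Phi$ on $G$ by hypothesis, and use the finite coset decomposition $G=\bigcup_i g_iH$ together with submultiplicativity of $\omega$ to bound $|\widetilde\Phi(g)|/(\omega(g)\omega(g^{-1}))$ uniformly before invoking Theorem~\ref{IN}. The constants are organized slightly differently, but the estimate is the same one the paper carries out.
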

\begin{proof}
Suppose, to the contrary, that $L^1(H,\omega|_H)$ is not weakly amenable. Then, since $H$ is commutative, Theorem~\ref{Thm_zhang} implies the existence of a non-trivial continuous group homomorphism $\Phi: H\to\bC$ and a constant $c>0$ such that
$$
\frac{|\Phi(h)|}{\omega(h)\omega(h^{-1})}\le c\quad (h\in H).
$$
By the assumption $\Phi$ extends to a continuous group homomorphism $\widetilde{\Phi}:G\to \bC$.
Because $H$ is of finite index, there exist $g_1, g_2,\ldots,g_n\in G$ such that $G=\cup_{i=1}^n\, g_iH$. Hence, every $g\in G$ can be written in the form $g=g_ih$ for some $1\le i\le n$, $h\in H$, and so
\begin{align*}
\frac{\left|\widetilde{\Phi}(g)\right|}{\omega(g)\omega(g^{-1})}&\le \frac{\left|\widetilde{\Phi}(h)\right|+\left|\widetilde{\Phi}(g_i)\right|}{\omega(g_ih) \omega(h^{-1}g_i^{-1})}\le \frac{\left|\widetilde{\Phi}(h)\right|+\left|\widetilde{\Phi}(g_i)\right|}{\omega(h) \omega(h^{-1})}\cdot \omega(g_i) \omega(g_i^{-1})\\ &\le \max\limits_{1\le i\le n}\left(c+{\left|\widetilde{\Phi}(g_i)\right|}\right)\omega(g_i) \omega(g_i^{-1})=const.
\end{align*}
It follows that $\L1o$ is not weakly amenable by Theorem~\ref{IN}, which contradicts our assumption.

\end{proof}

Given a locally compact group $G$ and a closed normal subgroup $H$ of it, we have seen that weak amenability of $\L1o$  does not pass to $L^1(H,\omega|_H)$ in general even $G$ is commutative. One may wonder whether the condition that both $L^1(H,\omega|_H)$ and $L^1(G/H,\hat{\omega})$ are weakly amenable forces $\L1o$ to be weakly amenable.
It turns out that the answer is also negative. A counterexample is as follows.

\begin{example}\label{counterexample} We consider $G = \boldsymbol{ax+b}$ and  $H = H_{\bb}$. Suppose that $w$ is a weight on $(\mathbb{R}^+,\cdot)$ that is not diagonally bounded, but such that $\ell^1(\mathbb{R}^+,w)$ is weakly amenable. (For example, we can take $w(a)=(1+|\ln a|)^{\alpha}$, $0<\alpha<1/2$.)
 We then define $\omega$ on $\boldsymbol{ax+b}$ by $\omega(a,b)=w(a)$ ($a>0$). Clearly, $\omega$ is a weight on $G$, $\omega|_H = \mathrm{constant}$, and $\hat\omega = w$. So $\ell^1(H,\omega|_H)$ and $\ell^1\left((\boldsymbol{ax+b})/H,\hat{\omega}\right)$ are both weakly amenable. But by our assumption $\omega$ is not diagonally bounded, and so $\ell^1(\boldsymbol{ax+b},\omega)$ is not weakly amenable due to Proposition~{\ref{ax+b}}.
\end{example}
Even $G$ is finitely generated, this situation could happen.
\begin{example}
Let $\mathbb{Z}\left[\frac12\right]$ denote the set of all dyadic fractions, i.e., the set of all rational numbers whose binary expansion is finite. Consider the countable subgroup $G_2$ of $\boldsymbol{ax+b}$ defined by
$$
G_2=\left\{(2^n,b): n\in\mathbb{Z},\,b\in \mathbb{Z}\left[\frac12\right]\right\}.
$$
In fact, $G_2$ is the subgroup of $\boldsymbol{ax+b}$ generated by the elements $(2,0)$ and $(1,1)$, and so it is a finitely generated amenable group. Let
$$
H_2=H_{\bb}\cap G_2=\left\{(1,b): b\in \mathbb{Z}\left[\frac12\right]\right\}.
$$
Then $H_2$ is a normal subgroup of $G_2$ and $G_2/H_2\cong(\mathbb{Z},+)$. On $G_2$ we consider the weight $\omega_{\alpha}$ ($0<\alpha<1/2$) defined by
$$
\omega_{\alpha}(2^n,b)=(1+|n|)^{\alpha}\quad (n\in\mathbb{Z}).
$$
The same argument as in Example~\ref{counterexample} shows that $\ell^1(G_2,\omega_{\alpha})$ is not weakly amenable while both $\ell^1(H_2,\omega_{\alpha})$, which is isomorphic to $\ell^1(H_2)$, and $\ell^1(G_2/H_2,\hat{\omega}_{\alpha})$, which is isometrically isomorphic to $\ell^1(\mathbb{Z},\omega_{\alpha})$, are weakly amenable. We are grateful to N.~Spronk for this observation.
\end{example}






\bibliographystyle{plain}


\end{document}